\def\blfootnote{\xdef\@thefnmark{}\@footnotetext}
\newlength\Textht
\newtheorem{thm}{Theorem}[section]
\newtheorem{cor}[thm]{Corollary}
\newtheorem{lem}[thm]{Lemma}
\newtheorem{prop}[thm]{Proposition}
\newtheorem{que}[thm]{Question}
\theoremstyle{definition}
\newtheorem{obs}[thm]{Observation}
\newtheorem{rem}[thm]{Remark}
\newcommand{\R}[1]{{\color{red} #1}}
\newcommand{\B}[1]{{\color{blue} #1}}
\newcommand{\rk}{\operatorname{r}}
\newcommand{\id}{\operatorname{Id}}
\newcommand{\Z}{{\mathbb Z}}
\newcommand\ZZ{\mathbb{Z}}
\newcommand\FF{\mathbb{F}}
\newcommand{\HH}{\mathcal{H}}
\newcommand{\GG}{\mathcal{G}}
\newcommand{\KK}{\mathcal{K}}
\newcommand{\Mat}{\operatorname{M}}
\newcommand{\GL}{\operatorname{GL}}
\newcommand{\Aut}{\operatorname{Aut}}
\newcommand{\End}{\operatorname{End}}
\newcommand{\im}{\operatorname{Im}}
\newcommand{\Fix}{\operatorname{Fix}}
\newcommand{\acl}{\operatorname{a-Cl}}
\newcommand{\ecl}{\operatorname{e-Cl}}
\newcommand{\astab}{\operatorname{a-Stab}}
\newcommand{\stab}{\operatorname{e-Stab}}
\newcommand{\leqfg}{\leqslant_{\rm f.g.}}
\newcommand{\leqfi}{\leqslant_{\rm f.i.}}
\begin{document}

\title{Computation of endo-fixed closures in free-abelian times free groups}

\author{Mallika Roy}
\address{Departament of Mathematics, Universidad del Pa\'{\i}s Vasco (UPV/EHU), Spain.} \email{mallika.roy@upc.edu}

\author{Enric Ventura}
\address{Departament de Matem\`atiques, Universitat Polit\`ecnica de Catalunya, CATALONIA.} \email{enric.ventura@upc.edu}

\subjclass{20E05, 20E36, 20K15}

\keywords{free-abelian times free, endomorphism, fixed subgroup, endo-fixed closure.}

\begin{abstract}
In this paper, we explore the behaviour of the fixed subgroups of endomorphisms of free-abelian times free (FATF) groups. We exhibit an algorithm which, given a finitely generated subgroup $\mathcal{H}$ of a FATF group $\mathcal{G}$, decides whether $\mathcal{H}$ is the fixed subgroup of some (finite) family of endomorphisms of $\GG$ and, in the affirmative case, it finds such a family. The algorithm combines both combinatorial and algebraic methods.
\end{abstract}

\maketitle

\section{Introduction}\label{intro}

The problem of studying fixed subgroups of automorhisms and endomorphisms of certain families of groups has a long history in the  literature of the last decades. The case of free groups $\FF$ is particularly interesting and reach. The story began with the paper Dyer--Scoot~\cite{DS}, where the authors proved that finite order automorphisms $\varphi\in \Aut(\FF_n)$ of the rank $n<\infty$ free group $\FF_n$ have its fixed subgroup $\Fix\varphi =\{w\in \FF_n \mid w\varphi=w\}$ always being a free factor of $\FF_n$ itself. This is not true in general, but soon P. Scoot conjectured that $r(\Fix\varphi)\leq n$ for every automorphism $\varphi\in \Aut(\FF_n)$. This conjecture motivated an intense research activity during the period of the 1980's, proving it in several special cases, and culminating in the paper Bestvina--Handel~\cite{BH}, where the conjecture was proved in its full generality. Apart from the result itself, this paper has been very influential as train-tracks for free group automorphisms were first introduced and studied in~\cite{BH}. The same rank inequality works even for general endomorphisms $\varphi\in \End(\FF_n)$; see Imrich--Turner~\cite{IT}.

Such a deep result, far from exhausting this line of research, stimulated further investigations focused on other properties of fixed subgroups. A relevant one was the computability of a free basis for $\Fix \varphi$, when $\varphi\in \Aut(\FF_A)$ is given by the images of the generators $A$ (with the notation $\FF_A$ we mean the free group on the alphabet $A$; so, $\FF_A \simeq \FF_{|A|}$). In the finite rank case, $|A|<\infty$, and for automorphisms, this was proved in Bogopolski--Maslakova~\cite{BM}, and alternatively in Feighn--Handel~\cite{FH}, both making strong use of the train-track technology, and providing quite complicated and high complexity algorithms. Moreover, the initial versions of the train-track theory were working only for automorphisms (or, at most, for injective endomorphisms), and several analogous questions for general endomorphisms remained intractable. In particular, the computability of (free basis for) fixed subgroups remained an open problem for endomorphisms of finitely generated free groups, until the very recent general solution provided by Mutanguha~\cite{M}: 

\begin{thm}[Bogopolski--Maslakova~\cite{BM}, Feighn--Handel~\cite{FH}, Mutanguha~\cite{M}]\label{computefix} There is an algorithm which, given an endomorphism $\varphi\in \End(\FF_n)$ by images of generators, it computes a free basis for $\Fix \varphi$. 
\end{thm}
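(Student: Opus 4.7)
The plan is to pass from the algebraic data of $\varphi$ to a topological representative $f\colon G\to G$ on a finite graph, control the dynamics of $f$ with train-track technology, and then read off a free basis of $\Fix\varphi$ from the fixed points of $f$ together with its indivisible fixed Nielsen paths. For $\varphi\in\Aut(\FF_n)$, I would realize $\varphi$ as a homotopy equivalence of a rose and apply the Bestvina--Handel algorithm to produce a relative train-track representative with respect to an $f$-invariant filtration $\emptyset=G_0\subset G_1\subset\cdots\subset G_N=G$, and then refine it to a completely split (CT) representative in the sense of Feighn--Handel. In a CT the cancellation under iteration is combinatorially transparent, which is exactly what is needed to enumerate the fixed data effectively: $\Fix\varphi$ splits as a free product whose factors are detected by fixed vertices of $f$ and by indivisible Nielsen paths fixed by $f$, both coming from finite lists that the CT machinery makes algorithmic. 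This is the strategy underlying the proofs of Bogopolski--Maslakova~\cite{BM} and Feighn--Handel~\cite{FH}.

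For a general, possibly non-injective endomorphism $\varphi$, the representative $f$ cannot be arranged to be a homotopy equivalence, and this is the obstacle that Mutanguha~\cite{M} surmounts. The idea I would follow is to pass to the \emph{eventual image} $\varphi^{\infty}(\FF_n)=\bigcap_{k\geq 0}\varphi^{k}(\FF_n)$, which is finitely generated and on which $\varphi$ restricts to an automorphism; the automorphism algorithm above then computes the fixed subgroup of that restriction, and one lifts back to $\Fix\varphi$ through the finitely many surplus steps needed for the image to stabilise. Carrying this out requires a careful analysis of how the non-injective phase collapses edges, handled in~\cite{M} by extending the train-track theory to graph maps which are only immersions on edges rather than homotopy equivalences, together with a version of the exponential-vs-polynomial stratification that survives folding.

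The hardest step is precisely this extension: building an algorithmically accessible train-track-style representative in the non-injective setting, and producing effective bounds on how many iterations of $\varphi$ must be computed before the image stabilises on $\varphi^{\infty}(\FF_n)$. Once such a representative is in hand, the remaining tasks---listing the candidate fixed loops and indivisible fixed Nielsen paths, pulling them back along the finite collapsing phase, and then folding the resulting graph in Stallings style to obtain a free basis of $\Fix\varphi$---become comparatively routine.
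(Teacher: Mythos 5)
The paper does not prove this statement at all: Theorem~\ref{computefix} is quoted as a black box from Bogopolski--Maslakova, Feighn--Handel and Mutanguha, so there is no internal argument to compare yours against. Your sketch is a broadly faithful outline of the strategy in those references (relative train tracks/CTs for the injective case, reading $\Fix\varphi$ off fixed vertices and indivisible Nielsen paths; reduction of the general case to the automorphism restricted to the stable image $\varphi^{\infty}(\FF_n)=\bigcap_k \varphi^k(\FF_n)$, which is indeed finitely generated, $\varphi$-invariant, and on which $\varphi$ acts as an automorphism containing $\Fix\varphi$ in its domain). It is, however, a survey of the cited proofs rather than a proof, and it contains one genuine inaccuracy: the descending chain $\varphi^k(\FF_n)$ need \emph{not} stabilise after finitely many steps (already $a\mapsto a^2$ on $\FF_1$ gives $\varphi^k(\FF_1)=\langle a^{2^k}\rangle$ with trivial stable image), so there are no ``finitely many surplus steps'' through which to lift back; the stable image is in general a strictly infinite intersection, and producing a basis for it algorithmically is precisely the content of Mutanguha's paper, not a routine preprocessing step. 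If you intend this as an actual proof rather than an attribution, that is the gap you would have to fill.
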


See the survey Ventura~\cite{V2} for many more details of this interesting story.

In a group $G$, a subgroup $H\leqslant G$ is said to be \emph{auto-fixed} (resp., \emph{endo-fixed}) if it is of the form $H=\Fix S$ for some $S\subseteq \Aut(G)$ (resp., some $S\subseteq \End(G)$). It is said that $H$ is \emph{1-auto-fixed} (resp., \emph{1-endo-fixed}) when the set $S$ can be taken to be of cardinal one. In the free case, $G=\FF_n$, for $n=2$ all these families do coincide but, for $n\geq 3$, it is known that the family of 1-auto-fixed subgroups is strictly contained in that of 1-endo-fixed subgroups; see Martino--Ventura~\cite{MV3}. Interestingly, for $n\geq 3$, it is not known whether the families of 1-auto-fixed (resp., 1-endo-fixed) and auto-fixed (resp., endo-fixed) subgroups do coincide or not; to the best of our knowledge, the most known in this direction is that auto-fixed (resp., endo-fixed) subgroups of $\FF_n$ are free factors of 1-auto-fixed (resp., 1-endo-fixed) subgroups of $\FF_n$, while free factors of 1-auto-fixed subgroups are not, in general, even endo-fixed; see Martino--Ventura~\cite{MV}. 

In the paper Ventura~\cite{V}, a kind of dual point of view was considered: instead of starting with a morphism $\varphi$ and asking about $\Fix \varphi$, one can start with a given subgroup $H\leqslant G$ and ask about the (pointwise) \emph{auto-stabilizer} and \emph{endo-stabilizer} of $H$, i.e., $\astab_{G}(H)=\{\varphi\in \Aut(G) \mid H\leqslant \Fix \varphi\}$ and $\stab_{G}(H)=\{\varphi\in \End(G) \mid H\leqslant \Fix \varphi\}$, respectively, a subgroup of $\Aut(G)$ and a submonoid of $\End(G)$. And more interestingly, following the terminology from~\cite{V}, one can consider the so-called \emph{auto-fixed closure} (resp., \emph{endo-fixed closure}) of $H$ with respect to $G$, as the smallest subgroups fixed by all automorphisms (resp., endomorphisms) of $G$ fixing $H$, namely, 
 $$
\acl_{G}(H)=\Fix \big( \astab_{G}(H) \big) =\bigcap_{\tiny \begin{array}{c} \varphi\in \Aut(G) \\ H\leqslant \Fix \varphi \end{array}} \Fix\varphi,
 $$
and 
 $$
\ecl_{G}(H)=\Fix \big( \stab_{G}(H) \big) =\bigcap_{\tiny \begin{array}{c} \varphi\in \End(G) \\ H\leqslant \Fix \varphi \end{array}} \Fix\varphi
 $$
(where, for any set of endomorphisms $S$, we write $\Fix S=\bigcap_{\varphi\in S} \Fix \varphi$). Observe that, in general, $\astab_{G}(H)\leqslant \stab_{G}(H)$ and so, $\ecl_{G}(H) \leqslant \acl_{G}(H)$, sometimes with strict inequality, for example, because of the mentioned result from Martino--Ventura~\cite{MV3} about the existence of 1-endo-fixed subgroups of $\FF_n$, $n\geq 3$, which are not auto-fixed. Note also that a subgroup $H\leqslant G$ is auto-fixed (resp., endo-fixed) if and only if $\acl_{G}(H)=H$ (resp., $\ecl_{G}(H)=H$). 

For example it is well known that, for every $1\neq w\in \FF_n$ and $0\neq r\in \ZZ$, the equation $X^r=w^r$ has a \emph{unique} solution in $\FF_n$, which is the obvious one $X=w$; this means that any endomorphism $\varphi\colon \FF_n \to \FF_n$ fixing $w^r$ \emph{must} also fix $w$. As a consequence, a cyclic subgroup $H\leqslant \FF_n$ is auto-fixed, if and only if it is endo-fixed, and if and only if it is maximal as a cyclic subgroup. For later use, we note that the same is true in free abelian groups $\ZZ^m$, and in direct products of the form $\ZZ^m\times \FF_n$.

In the context of free groups, auto-stabilizers are quite well behaved while endo-stabilizers could be intricate. A classical result from McCool~\cite{McCool} (see also Lyndon--Schupp~\cite[Prop.~I.5.7]{LS}) already established that, for a finitely generated subgroup $H\leqfg \FF_n$, the auto-stabilizer $\astab_{\FF_n}(H)$ is again finitely generated and computable, i.e., one can compute explicit automorphisms $\varphi_1, \ldots ,\varphi_k\in \Aut(\FF_n)$ such that $\astab_{\FF_n}(H)=\langle \varphi_1, \ldots ,\varphi_k \rangle \leqslant \Aut(\FF_n)$ (even more: $\astab_{\FF_n}(H)$ is finitely presented, and a full finite set of relations among the $\varphi_i$'s can also be computed). Quite contrasting, the case of endomorphisms is much more delicate: in the paper Ciobanu--Dicks~\cite{CD} the authors gave an explicit example of a finitely generated subgroup $H\leqfg \FF_n$ such that its endo-stabilizer $\stab_{\FF_n}(H)$ is \emph{not finitely generated} as a submonoid of $\End(\FF_n)$ (with the obvious consequent complications for studying algorithmic issues).

Focusing again on algorithmic problems, in the paper Ventura~\cite{V} it was proved that, if $H\leqfg \FF_n$ then both $\acl_{\FF_n}(H)$ and $\ecl_{\FF_n}(H)$ are also finitely generated and computable:

\begin{thm}[{Ventura, \cite[Thm. 9, 11]{V}}]\label{thm for free endo}
Let $H\leqfg \FF_n$ be given by a finite set of generators. Then, the auto-fixed closure $\acl_{\FF_n}(H)$ (resp., the endo-fixed closure $\ecl_{\FF_n}(H)$) of $H$ is also finitely generated, and a free-basis for it is algorithmically computable, together with a set of $k\leq 2n$ automorphisms $\varphi_1, \ldots, \varphi_k \in \Aut(\FF_n)$ (resp., endomorphisms $\varphi_1, \ldots, \varphi_k \in \End(\FF_n)$), such that $\acl_{\FF_n}(H)= \Fix \varphi_1 \cap \cdots \cap \Fix \varphi_k$ (resp., $\ecl_{\FF_n}(H)= \Fix \varphi_1 \cap \cdots \cap \Fix \varphi_k$).
\end{thm}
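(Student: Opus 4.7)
I would prove both statements by a single iterative descending construction. Initialize $K_{0}=\FF_n$, and at stage $i\geq 0$ proceed as follows: if $K_i$ strictly contains the target closure, choose some $\varphi_{i+1}\in \stab_{\FF_n}(H)$ (respectively $\varphi_{i+1}\in\astab_{\FF_n}(H)$) that fails to fix some element of $K_i$; then set $K_{i+1}:=K_i\cap\Fix\varphi_{i+1}$ and iterate. By construction $K_{i+1}=\Fix\{\varphi_1,\ldots,\varphi_{i+1}\}$, so the Imrich--Turner bound~\cite{IT} (respectively Bestvina--Handel~\cite{BH}) gives $\rk(K_{i+1})\leq n$; in particular each $K_{i+1}$ is finitely generated. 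When the process halts at some stage $k$ we have $\Fix\varphi_1\cap\cdots\cap\Fix\varphi_k=\ecl_{\FF_n}(H)$ (resp.~$\acl_{\FF_n}(H)$), and a free basis can then be read off.

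\textbf{Three ingredients are needed.} (a) \emph{Effective update}: once $\varphi_{i+1}$ is in hand, Theorem~\ref{computefix} produces a free basis of $\Fix\varphi_{i+1}$, and Howson's algorithm computes the intersection with $K_i$. (b) \emph{Length bound $k\leq 2n$}: the chain $K_0\supsetneq K_1\supsetneq\cdots$ consists of finitely generated overgroups of $H$ in $\FF_n$ all of rank at most $n$. Using that a proper inclusion $L\subsetneq K$ of finitely generated subgroups of $\FF_n$ with $\rk(L)=\rk(K)\geq 2$ forces $[K:L]=\infty$ (by Nielsen--Schreier), combined with a secondary complexity invariant controlled by the Stallings graph of $H$, one bounds the length of any such chain by $2n$. (c) \emph{Witness finding}: for automorphisms, McCool's theorem yields a finite and computable generating set $\psi_1,\ldots,\psi_\ell$ of $\astab_{\FF_n}(H)$, so one simply tests each $\psi_j$ against the finitely many generators of $K_i$ and, in the negative case, sets $\varphi_{i+1}:=\psi_j$; for endomorphisms, one enumerates the recursively enumerable family of $\varphi\in\End(\FF_n)$ fixing $H$, refining whenever a newly produced $\varphi$ witnesses a strict drop of the current $K_i$.

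\textbf{Main obstacle.} The serious difficulty is the endomorphism version of (c). Since $\stab_{\FF_n}(H)$ need not be finitely generated, by the Ciobanu--Dicks example~\cite{CD}, no direct McCool-style search is available, and a naive enumeration has no intrinsic stopping criterion. My plan is to couple the enumeration with (i) the length bound from (b), which forces at most $2n$ successful refinements, and (ii) an effective certificate of when a given finitely generated overgroup of $H$ is already endo-fixed (equivalently, deciding whether every $\varphi\in\stab_{\FF_n}(H)$ fixes a prescribed element of $\FF_n$). The algorithm then halts either upon producing such a certificate or upon performing the $2n$-th refinement. Designing this certificate, which I expect to require the Stallings-graph and pullback machinery developed in~\cite{V} together with the fine structure of endo-fixed subgroups, is the hardest step of the argument.
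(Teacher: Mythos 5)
Your automorphism half is essentially the standard argument, and it agrees with the route sketched in this paper: McCool's theorem gives a finite computable generating set $\psi_1,\ldots,\psi_\ell$ of $\astab_{\FF_n}(H)$; since the fixed subgroup of a set of automorphisms equals that of the subgroup it generates, $\acl_{\FF_n}(H)=\Fix\psi_1\cap\cdots\cap\Fix\psi_\ell$, which is computable via Bogopolski--Maslakova and pull-backs; the descending-chain bookkeeping then yields $k\le 2n$. (A minor caveat: Imrich--Turner bounds the rank of $\Fix\varphi$ for a \emph{single} endomorphism; to bound the rank of $\Fix\{\varphi_1,\ldots,\varphi_{i+1}\}$ you need the stronger fact that endo-fixed subgroups are free factors of $1$-endo-fixed ones, as in Martino--Ventura, not just \cite{IT}.)

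The endomorphism half has a genuine gap, and you have located it yourself: ingredient (c). Because $\stab_{\FF_n}(H)$ need not be finitely generated (Ciobanu--Dicks), your algorithm must enumerate $\stab_{\FF_n}(H)$ and therefore needs a halting criterion, which you describe as an ``effective certificate that a given finitely generated overgroup of $H$ is already endo-fixed'', equivalently, a decision procedure for whether every $\varphi\in\stab_{\FF_n}(H)$ fixes a prescribed element. But that decision problem is precisely the content of the theorem and of Corollary~\ref{Enric-deciding}; assuming it as an ingredient is circular, and no construction of it is offered. Concretely, if the enumeration happens to reach the true closure after fewer than $2n$ refinements, your procedure can never certify that it is done and will run forever. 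This is exactly the obstruction that forces Ventura's proof to take a different route: rather than enumerating endomorphisms, one constructs \emph{a priori} finitely many computable intermediate subgroups $H\leqfg K_1,\ldots,K_t\leqfg\FF_n$ such that $\ecl_{\FF_n}(H)=\acl_{K_1}(H)\cap\cdots\cap\acl_{K_t}(H)$, thereby reducing the endomorphism case to finitely many instances of the already-solved automorphism case inside subgroups. Without that reduction (or an actual construction of your certificate), the proposal does not close.
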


With respect to this result, it is interesting to focus on the big difference between the situation for automorphisms and for endomorphisms. For automorphisms, this is a straightforward consequence from previous results: given $H\leqfg \FF_n$, one can use McCool's Theorem and compute automorphisms $\varphi_1, \ldots ,\varphi_k\in \Aut(\FF_n)$ such that $\astab_{\FF_n}(H)=\langle \varphi_1, \ldots ,\varphi_k \rangle \leqslant \Aut(\FF_n)$; then, use Bogopolski--Maslakova's Theorem to compute free bases for the subgroups $\Fix\varphi_1, \ldots ,\Fix\varphi_k$; and, finally, use the pull-back technique (see~\cite{KM}) to compute a free basis for the finite intersection $\Fix\varphi_1 \cap \cdots \cap \Fix\varphi_k =\Fix(\astab_{\FF_n}(H))=\acl_{\FF_n}(H)$ (a more detailed analysis allows to take $k\leq 2n$). The situation for endomorphisms, although apparently similar, is much harder: on one hand the endo-stabilizer $\stab_{\FF_n}(H)$ need not be finitely generated and, on the other hand, the fixed subgroup of an endomorphism $\varphi\in \End(\FF_n)$ was not known to be computable at the time Theorem~\ref{thm for free endo} was written. Even though these two obstacles ruined completely the possibility to adapt the previous argument to the endomorphism case, the author of~\cite{V} managed to prove the above result for endomorphisms with a completely different strategy: he constructed finitely many intermediate subgroups $H\leqfg K_1,\ldots ,K_t\leqfg \FF_n$ such that $\ecl_{\FF_n}(H)=\acl_{K_1}(H)\cap \cdots \cap \acl_{K_t}(H)$. 

Of course, as an immediate corollary, one gets the algorithmic recognition of auto-fixed (resp., endo-fixed) subgroups:

\begin{cor}[{Ventura, \cite[Cor. 10, 12]{V}}]\label{Enric-deciding}
It is algorithmically decidable whether a given $H\leqfg \FF_n$ is auto-fixed (resp., endo-fixed) and, in case it is, compute a set of $k\leq 2n$ automorphisms $\varphi_1, \ldots, \varphi_k \in \Aut(\FF_n)$ (resp., endomorphisms $\varphi_1, \ldots, \varphi_k \in \End(\FF_n)$) such that $H=\Fix \varphi_1 \cap \cdots \cap \Fix \varphi_k$.
\end{cor}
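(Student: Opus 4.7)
The plan is to derive the result as a direct consequence of Theorem~\ref{thm for free endo} combined with the observation, noted in the introduction, that a subgroup $H\leqslant G$ is auto-fixed (resp., endo-fixed) if and only if $\acl_{G}(H)=H$ (resp., $\ecl_{G}(H)=H$). So the decision problem reduces to computing the closure of $H$ and then testing equality of two finitely generated subgroups of $\FF_n$.

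Concretely, given $H\leqfg \FF_n$ by a finite generating set, first invoke Theorem~\ref{thm for free endo} to produce a free basis for $\acl_{\FF_n}(H)$ (resp., $\ecl_{\FF_n}(H)$), together with a witnessing family $\varphi_1, \ldots, \varphi_k$ of at most $2n$ automorphisms (resp., endomorphisms) whose joint fixed subgroup equals that closure. Next, compare this closure with $H$: the standard Stallings foldings machinery for finitely generated subgroups of free groups (or, alternatively, Nielsen reduction) allows one to decide whether two such subgroups coincide by building the canonical core graph of each and checking isomorphism relative to the basepoint. If the two agree, then $H=\Fix \varphi_1 \cap \cdots \cap \Fix \varphi_k$, so $H$ is auto-fixed (resp., endo-fixed) and the required family is precisely the one already output by Theorem~\ref{thm for free endo}; if they differ, then $H$ is strictly contained in its closure and hence, by the characterization above, cannot be auto-fixed (resp., endo-fixed).

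There is no real obstacle beyond what has already been surmounted in Theorem~\ref{thm for free endo}. All the substantive difficulty --- in particular, the handling of the endomorphism case, where $\stab_{\FF_n}(H)$ need not be finitely generated and $\Fix \varphi$ was not even known to be computable at the time --- has been absorbed into that theorem. The residual work, namely deciding equality of two finitely generated subgroups of $\FF_n$, is a classical and efficient procedure, so the corollary falls out at once.
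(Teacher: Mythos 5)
Your proposal is correct and matches the paper's (implicit) argument: the paper presents this as an immediate corollary of Theorem~\ref{thm for free endo}, namely compute the fixed closure together with its witnessing family of at most $2n$ morphisms, then decide whether the closure equals $H$ (a routine equality test for finitely generated subgroups of $\FF_n$), using that $H$ is auto-fixed (resp.\ endo-fixed) exactly when it coincides with its closure. No discrepancy to report.
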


\bigskip

Let us move now into the bigger and more complicated family of free-abelian times free groups, namely groups of the form $\GG=\ZZ^m\times \FF_n$. In this more general setting an extra difficulty shows up quickly: as soon as $m\geqslant 1$ and $n\geqslant 2$, fixed subgroups of automorphisms of $\GG$ need not be finitely generated any more. The simplest example of this new behaviour is the automorphism $\Psi$ of $\ZZ\times \FF_2=\langle t\rangle\times \langle a,b\rangle$ mapping $t\mapsto t$, $a\mapsto ta$, $b\mapsto b$; since it maps $t^{r}w(a,b)$ to $t^{r+|w|_a}w(a,b)$, it is easy to see that its fixed subgroup 
 $$
\Fix \Psi =\{ t^{r}w(a,b) \mid |w|_a=0\}=\langle t \rangle \times \ll b\gg_{\langle a,b\rangle}
 $$
is normal and not finitely generated. Apart from this, the group $\GG=\ZZ^m\times \FF_n$, as soon as $m\geq 1$ and $n\geq 2$, is known to be not Howson, i.e., it contains finitely generated subgroups $\HH,\KK\leqfg \GG$ whose intersection $\HH\cap \KK$ is not finitely generated (consider, for example, $\HH=\langle a, b\rangle$, $\KK=\langle ta, b\rangle$, and $\HH\cap \KK =\ll b\gg_{\langle a,b\rangle}$). Of course, these two facts make the treatment of fixed subgroups and fixed closures much more involved, specially for the arguments of algorithmic nature. The first results from the literature in this direction are the following: 

\begin{thm}[{Delgado--Ventura, \cite[Cor~6.7]{DV}}]\label{DV-fix}
Let $\GG=\ZZ^m\times \FF_n$. There is an algorithm which, given an automorphism $\Psi\in \Aut(\GG)$ (by the images of generators), decides whether $\Fix\Psi$ is finitely generated and, if so, computes a basis for it. The same is true for endomorphisms $\Psi\in \End(\GG)$, assuming a free basis for the fixed subgroup of an endomorphism of $\FF_n$ is computable. 
\end{thm}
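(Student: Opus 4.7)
The plan is to reduce the computation of $\Fix\Psi$ in $\GG=\ZZ^m\times\FF_n$ to two ingredients: (i) the computation of $\Fix\phi$ for an induced endomorphism $\phi\in\End(\FF_n)$ (which is the content of the hypothesis, or of Theorem~\ref{computefix} unconditionally), combined with (ii) elementary linear algebra over $\ZZ$. Using that $\ZZ^m\times\{1\}$ is the centre of $\GG$ when $n\geq 2$ (the cases $n\leq 1$ are abelian and trivial), every $\Psi\in\End(\GG)$ admits a normal form
$$\Psi(v,w)=\bigl(Av+\ell(w),\,u^{\beta(v)}\phi(w)\bigr),$$
with $A\in\Mat_m(\ZZ)$, $\ell\colon\FF_n\to\ZZ^m$ a homomorphism (factoring through the abelianization), $\phi\in\End(\FF_n)$, $\beta\colon\ZZ^m\to\ZZ$ linear, and $u\in\FF_n$ centralizing $\phi(\FF_n)$. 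All of these data are algorithmically readable from the images of the generators of $\GG$. For automorphisms, preservation of the centre forces $\beta=0$, $u=1$, $A\in\GL_m(\ZZ)$, $\phi\in\Aut(\FF_n)$.

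Specialise first to the clean case $u=1$ (covering all automorphisms, and endomorphisms with $\beta=0$). The fixed-point equation $(v,w)=\Psi(v,w)$ then splits as
$$w\in\Fix\phi\qquad\text{and}\qquad (I-A)v=\ell(w).$$
Compute a free basis $\{w_1,\ldots,w_k\}$ of $\Fix\phi$, and set $L=\langle\ell(w_1),\ldots,\ell(w_k)\rangle\leq\ZZ^m$ and $M=\im(I-A)\leq\ZZ^m$. Projecting onto $\FF_n$ yields the short exact sequence
$$1\longrightarrow\ker(I-A)\longrightarrow\Fix\Psi\longrightarrow N\longrightarrow 1,$$
where $N=\Fix\phi\cap\ell^{-1}(M)$ is the kernel of the homomorphism $\Fix\phi\to\ZZ^m/M$ induced by $\ell$. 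Since $\ker(I-A)\leq\ZZ^m$ is always finitely generated, $\Fix\Psi$ is f.g.\ iff $N$ is. But $N$ is normal in $\Fix\phi$, and by the classical fact that a nontrivial finitely generated normal subgroup of a non-abelian free group has finite index, whenever $\Fix\phi$ has rank at least $2$ one concludes that $N$ is f.g.\ iff $N$ has finite index in $\Fix\phi$, iff the image of $L$ in $\ZZ^m/M$ is finite. This condition is decidable via Smith normal form applied to $M$; the boundary cases with rank of $\Fix\phi$ at most $1$ are trivially f.g.\ and handled directly.

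When the criterion holds, produce an explicit generating set of $\Fix\Psi$ by: (a) computing a free basis of the finite-index subgroup $N\leq\Fix\phi$ via Stallings foldings (or Reidemeister--Schreier); (b) for each basis element $n_j$ of $N$, solving the now-consistent linear system $(I-A)v_j=\ell(n_j)$ over $\ZZ$ for some particular $v_j$; (c) computing a $\ZZ$-basis $\{e_1,\ldots,e_s\}$ of $\ker(I-A)$. Then the elements $(v_j,n_j)$ together with the $(e_i,1)$ generate $\Fix\Psi$. The remaining endomorphism case $u\neq 1$ (necessarily with $\beta\neq 0$) is actually easier: the centralisation $[u,\phi(\FF_n)]=1$ forces $\phi(\FF_n)$ into the cyclic subgroup $\langle u_0\rangle\leq\FF_n$ generated by the root $u_0$ of $u$, so any fixed $w$ must also lie in $\langle u_0\rangle$, and the whole fixed-point system collapses into a linear system over $\ZZ$ in the variables $(v,k)$ (where $w=u_0^k$) which is always finitely solvable.

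The main obstacle is step (i): the computability of $\Fix\phi$ for a general endomorphism $\phi\in\End(\FF_n)$. This is precisely the ingredient that was missing at the time of~\cite{DV}, which explains the conditional clause in the endomorphism half of the statement; it is now settled unconditionally by Theorem~\ref{computefix}.
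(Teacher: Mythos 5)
Your argument is correct and follows essentially the same route as the cited proof in Delgado--Ventura: split the fixed-point equation of a type-(I) endomorphism into the free condition $w\in\Fix\phi$ and an integral linear condition, observe that $(\Fix\Psi)\pi$ is a normal subgroup of $\Fix\phi$ (your $N$, the kernel of $\ell$ followed by $\ZZ^m\twoheadrightarrow\ZZ^m/M$), and invoke the trivial-or-finite-index dichotomy for finitely generated normal subgroups of a non-abelian free group, with the index controlled by linear algebra over $\ZZ$. The only point left implicit is that $N$ is automatically nontrivial when $\rk(\Fix\phi)\geq 2$ (it contains $[\Fix\phi,\Fix\phi]$), which is what licenses your ``f.g.\ iff finite index'' equivalence; your separate, always-finitely-generated treatment of the type-(II) case is consistent with Observation~\ref{stable letter obs}(ii).
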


See Section~\ref{basics} for the concept of basis in the context of free-abelian times free groups. Note that, at present time, the last assumption in Theorem~\ref{DV-fix} is not a restriction anymore because of the result of Mutanguha~\cite{M}; see Theorem~\ref{computefix}.

\begin{thm}[{Roy--Ventura, \cite[Prop.~5.13]{RV}}]\label{RV-fix}
Let $\GG=\ZZ^m\times \FF_n$. There is an algorithm which, given automorphisms $\Psi_1, \ldots ,\Psi_{\ell}\in \Aut(\GG)$ (by the images of generators), decides whether $\Fix\Psi_1\cap \cdots \cap \Fix\Psi_{\ell}$ is finitely generated and, if so, computes a basis for it.
\end{thm}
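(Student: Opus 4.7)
The plan is to reduce the problem to two effectively solvable ingredients: intersecting fixed subgroups of automorphisms in $\FF_n$, and integer linear algebra.

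First I would analyse the structure of $\Aut(\GG)$. Since $\ZZ^m$ is the centre (and hence characteristic) of $\GG$, each $\Psi_i$ restricts to an $M_i\in \GL_m(\ZZ)$ on $\ZZ^m$ and induces an $\alpha_i\in \Aut(\FF_n)$ on the quotient $\GG/\ZZ^m\simeq \FF_n$; the homomorphism axiom then forces the shape $\Psi_i(t,w)=(M_it+\phi_i(w),\alpha_i(w))$, where $\phi_i\colon \FF_n\to \ZZ^m$ is a homomorphism (necessarily factoring through the abelianization). Consequently $(t,w)\in \Fix\Psi_i$ if and only if $w\in \Fix\alpha_i$ and $(I-M_i)t=\phi_i(w)$, and stacking the $\ell$ conditions gives
\[
\HH \;=\; \bigcap_{i=1}^{\ell}\Fix\Psi_i \;=\; \bigl\{(t,w)\in \GG : w\in L \text{ and } At=\Phi(w)\bigr\},
\]
where $L=\bigcap_{i=1}^{\ell}\Fix\alpha_i\leqfg \FF_n$, $A$ is the $m\ell\times m$ integer matrix stacking the $I-M_i$'s, and $\Phi\colon \FF_n\to \ZZ^{m\ell}$ is the homomorphism stacking the $\phi_i$'s.

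Second, I would compute $L$ explicitly by applying Theorem~\ref{computefix} to each $\alpha_i$ and then intersecting the resulting free bases via repeated Stallings pull-backs (valid because free groups are Howson). Denote by $\pi\colon \GG\twoheadrightarrow \FF_n$ the canonical projection, and set $L'=\{w\in L:\Phi(w)\in \im A\}=\pi(\HH)\leqslant L$. There is a short exact sequence $1\to \ker A\to \HH\to L'\to 1$ in which the $\ZZ^m$-kernel $\ker A$ is always finitely generated by integer linear algebra; hence $\HH$ is finitely generated if and only if $L'$ is.

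Third, I would decide finite generation of $L'$. By construction $L'$ is the kernel of the homomorphism $L\to \ZZ^{m\ell}/\im A$, $w\mapsto \Phi(w)+\im A$, so $L/L'$ embeds into a finitely generated abelian group and, in particular, $L'\supseteq [L,L]$. If $L/L'$ is finite, then $[L:L']<\infty$ and Nielsen--Schreier yields $L'$ finitely generated with an effective free basis obtained from a Schreier transversal applied to the Stallings graph of $L$; if instead $L/L'$ is infinite and $\rk L\geqslant 2$, then $L'$ has infinite index and infinite rank, forcing $\HH$ to be non-finitely-generated (the case $\rk L\leqslant 1$ is trivial). Finiteness of $L/L'$ is decided by Smith-normal-form computations on the matrix representing the induced map $L^{\rm ab}\to \ZZ^{m\ell}/\im A$.

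Finally, when $L'$ is finitely generated with basis $u_1,\ldots ,u_r$ produced as above, solve the integer linear systems $At_j=\Phi(u_j)$ to obtain $t_j\in \ZZ^m$, compute a $\ZZ$-basis $c_1,\ldots ,c_s$ of $\ker A$, and output
\[
\bigl\{(c_1,1),\ldots ,(c_s,1),\,(t_1,u_1),\ldots ,(t_r,u_r)\bigr\}
\]
as the desired basis of $\HH$. The main obstacle is the effective extraction of a free basis of $L'$ when it is a proper finite-index subgroup of $L$: correctly passing from the finite quotient $L/L'$ (read off from $L^{\rm ab}\to \ZZ^{m\ell}/\im A$) to a Schreier transversal inside the Stallings graph of $L$, and then threading those words consistently to the integer vectors $t_j$; everything else reduces to standard free-group pull-back and Smith-normal-form computations.
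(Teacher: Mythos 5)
Your proposal is correct and follows essentially the same strategy as the source (Roy--Ventura \cite{RV}, and the authors' own treatment of $\Fix\mathcal{S}_0$): write each automorphism in the normal form $\Psi_{\varphi,Q,P}$, reduce the intersection to the free-part intersection $L=\bigcap\Fix\alpha_i$ together with the lattice condition $\Phi(w)\in\im A$, decide finite generation by checking whether the normal subgroup $L'=\pi(\HH)$ is of finite index in $L$ (or $L$ has rank at most $1$), and extract a basis via a Schreier/Stallings coset graph plus integer linear algebra. The only point worth making explicit is the one you already cover implicitly: when $\rk L\geq 2$ and $L/L'$ is infinite, $L'$ is nontrivial (it contains $[L,L]$) and normal of infinite index, which is what forces it to be infinitely generated.
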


Note that, in order to show Theorem~\ref{RV-fix}, it is not enough to apply Theorem~\ref{DV-fix} finitely many times and then somehow control the intersection of the computed fixed subgroups: it could very well happen that some of the $\Fix \Psi_i$'s are not finitely generated while the full intersection $\Fix\Psi_1\cap \cdots \cap \Fix\Psi_{\ell}$ is. Or it could also happen that, even with all of the individual $\Fix \Psi_i$'s and the full intersection $\Fix \Psi_1 \cap \cdots \cap \Fix \Psi_{\ell}$ being finitely generated, \emph{all} of the intermediate intersections are not finitely generated; see Delgado--Roy--Ventura~\cite{DRV}. The proof of Theorem~\ref{RV-fix} uses a completely different strategy. 

\begin{rem}\label{remark}
We observe that Theorem~\ref{RV-fix} extends to endomorphisms with exactly the same proof given in Roy--Ventura~\cite[Prop.~5.13]{RV} except that, at the point when one needs to compute fixed subgroups of certain endomorphisms of $\FF_n$, one needs to invoke Mutanguha's Theorem~\ref{computefix} instead of Bogopolski--Maslakova's one (this was not reported in~\cite{RV} because Mutanguha's result was an open problem at that time).  
\end{rem}

With the help of Theorem~\ref{RV-fix}, M. Roy and E. Ventura managed to compute auto-closures of finitely generated subgroups in free-abelian times free groups (together with the previous decision on whether they are finitely generated), and proved the analog to Theorem~\ref{thm for free endo} and Corollary~\ref{Enric-deciding} for automorphisms: 

\begin{thm}[Roy--Ventura, \cite{RV}]\label{closure}
Let $\GG=\ZZ^m\times \FF_n$. There is an algorithm which, given a finite set of generators for a subgroup $\HH\leqfg \GG$, outputs a finite set of automorphisms $\Psi_1, \ldots, \Psi_{\ell} \in \Aut(\GG)$ such that $\acl_{G}(\HH)=\Fix \Psi_1 \cap \cdots \cap \Fix \Psi_{\ell}$, decides whether this is finitely generated and, in case it is, computes a basis for it. 
\end{thm}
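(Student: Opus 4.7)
The plan is to reduce the statement to Theorem~\ref{RV-fix} by producing, from the input generating set of $\HH$, a finite list of automorphisms $\Psi_1,\dots,\Psi_{\ell}\in \astab_{\GG}(\HH)$ whose common fixed subgroup equals $\acl_{\GG}(\HH) = \Fix\bigl(\astab_{\GG}(\HH)\bigr)$. Once such a list is produced, Theorem~\ref{RV-fix} settles both the decision of finite generation and, when applicable, the computation of a basis.

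To produce the list, I would exploit the well-known structural description of $\Aut(\GG)$. Since $\ZZ^m$ is central and characteristic in $\GG$ (for $n\geq 1$), every $\Psi\in \Aut(\GG)$ is determined by a triple $(Q,\varphi,h)$ with $Q\in \GL_m(\ZZ)$, $\varphi\in \Aut(\FF_n)$, and $h\colon \FF_n \to \ZZ^m$ a homomorphism, acting by $\Psi(\mathbf{a},w) = (Q\mathbf{a} + h(w),\, w\varphi)$ (writing $\ZZ^m$ additively). The condition that $\Psi$ fix a given generator $(\mathbf{a},w)$ of $\HH$ splits into: (a) $w\varphi = w$, forcing $\varphi$ into $\astab_{\FF_n}(\pi(\HH))$, where $\pi\colon \GG \twoheadrightarrow \FF_n$ is the canonical projection; and (b) the $\ZZ$-linear equation $(Q-I)\mathbf{a} + h(w) = \mathbf{0}$ in $\ZZ^m$.

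The algorithm then proceeds in two layers. First, compute generators for $\pi(\HH)\leqfg \FF_n$ and apply McCool's theorem \cite{McCool} to obtain a finite generating set $\varphi_1,\dots,\varphi_s$ of $\astab_{\FF_n}(\pi(\HH))$. Second, for each $\varphi_i$ in turn, use effective integer linear algebra to solve the finite system in (b), obtaining a finitely generated $\ZZ$-module of admissible pairs $(Q-I, h)$; the invertibility condition $Q\in \GL_m(\ZZ)$ restricts this module to a nonempty subset in which finitely many explicit representatives suffice. Combining the $\varphi_i$'s with these representatives yields the desired list $\Psi_1,\dots,\Psi_{\ell}\in \astab_{\GG}(\HH)$; a rank count in the spirit of the bound $k\leq 2n$ from~\cite{V} guarantees that boundedly many of them are enough to force $\bigcap_i \Fix\Psi_i = \Fix(\astab_{\GG}(\HH))$.

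The main obstacle I anticipate is the interplay between the two layers: enforcing the \emph{invertibility} of $Q$ algorithmically while keeping the intersection $\bigcap_i \Fix\Psi_i$ equal to $\Fix(\astab_{\GG}(\HH))$, especially because the abelian datum $h$ depends on the free-group automorphism $\varphi$ through the generator word $w$. A secondary difficulty is that $\astab_{\GG}(\HH)$ itself may fail to be finitely generated as a group; however, following the philosophy of Ventura's proof of Theorem~\ref{thm for free endo}, we do not need all of $\astab_{\GG}(\HH)$, only enough of its elements to cut out the common fixed subgroup $\acl_{\GG}(\HH)$, after which Theorem~\ref{RV-fix} takes care of the rest.
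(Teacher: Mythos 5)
A preliminary remark on provenance: Theorem~\ref{closure} is not proved in this paper at all --- it is quoted from Roy--Ventura~\cite{RV}. The closest argument actually written here is the proof of its endomorphism analogue (Theorem~\ref{main} via Propositions~\ref{prop main} and~\ref{prop B}), and your proposal follows exactly that two-layer architecture: handle the free part by reducing to finitely many $\varphi$'s (for automorphisms, McCool's theorem plus the fact that the common fixed subgroup of a group of automorphisms equals that of any generating set), handle the abelian data by observing that the conditions $\mathbf{a_i}(Q-I)+\mathbf{u_i}P=\mathbf{0}$, $\mathbf{b_j}(Q-I)=\mathbf{0}$ are $\ZZ$-linear in $(Q-I,P)$ and independent of $\varphi$, and then feed the resulting finite family of automorphisms into Theorem~\ref{RV-fix}. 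That skeleton is sound and matches the strategy of the cited source.

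The genuine gap is the point you flag as ``the main obstacle'' and then dispose of in one clause: the invertibility of $Q$. In the endomorphism setting (Proposition~\ref{prop B}) one computes a lattice basis $(X_1,Y_1),\ldots,(X_q,Y_q)$ of the solution subgroup $L\leqslant \Mat_{m\times m}(\ZZ)\times\Mat_{n\times m}(\ZZ)$ and sets $Q_j=X_j+I$, $P_j=Y_j$; whether $Q_j$ is invertible is irrelevant because $\Psi_{\varphi,Q_j,P_j}$ is still an endomorphism fixing $\HH$. For automorphisms this breaks down: the set $A=\{(X,Y)\in L \mid I+X\in\GL_m(\ZZ)\}$ is \emph{not} a subgroup of $L$, a lattice basis of $L$ need not lie in $A$, and the subgroup $\langle A\rangle$ can be a \emph{proper} sublattice of $L$ (already for $m=1$, where $I+X\in\GL_1(\ZZ)$ forces $X\in\{0,-2\}$, so the first coordinate of $\langle A\rangle$ lands in $2\ZZ$). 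Since $\acl_{\GG}(\HH)$ is cut out by the conditions coming from $\langle A\rangle$ rather than from $L$, this is not a technicality: using $L$ would compute something between $\ecl_{\GG}(\HH)$ and $\acl_{\GG}(\HH)$. What is actually needed is (a) the observation that $\bigcap_{(X,Y)\in A}\Fix\Psi_{\varphi,I+X,Y}$ depends only on $\langle A\rangle$, and (b) an algorithm producing finitely many elements \emph{of $A$} generating $\langle A\rangle$, so that the corresponding $\Psi$'s are honest automorphisms of $\GG$ fixing $\HH$ while their common fixed subgroup is exactly $\Fix(\astab_{\GG}(\HH))$. Your sentence ``the invertibility condition restricts this module to a nonempty subset in which finitely many explicit representatives suffice'' asserts (b) without argument; existence of such a finite subset follows from Noetherianity of $L$, but exhibiting it effectively is precisely the nontrivial content supplied in~\cite{RV}. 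Relatedly, the appeal to ``a rank count in the spirit of $k\leq 2n$'' does no work here: what collapses the infinite intersection to a finite one is the linearity of the fixing conditions in $(Q-I,P)$ together with the group-generation argument for the $\varphi$-layer, not a bound on the number of maps.
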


\begin{cor}[Roy--Ventura, \cite{RV}]\label{deciding}
One can algorithmically decide whether a given $\HH\leqfg \GG$ is auto-fixed and, in case it is, compute a finite set of automorphisms $\Psi_1, \ldots, \Psi_{\ell} \in \Aut(\GG)$ such that $\HH=\Fix \Psi_1 \cap \cdots \cap \Fix \Psi_{\ell}$.
\end{cor}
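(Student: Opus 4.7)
The plan is to derive this corollary directly from Theorem~\ref{closure}, using only standard algorithmic facts about finitely generated subgroups of $\GG = \ZZ^m\times \FF_n$. Given $\HH\leqfg \GG$ by a finite generating set, first I would apply Theorem~\ref{closure} to algorithmically produce automorphisms $\Psi_1,\ldots,\Psi_{\ell}\in \Aut(\GG)$ such that $\acl_{\GG}(\HH)=\Fix\Psi_1\cap\cdots\cap\Fix\Psi_{\ell}$, together with the decision of whether this auto-fixed closure is finitely generated, and (if so) an explicit basis for it. The observation that ties everything together is the tautology $\HH$ is auto-fixed $\iff \HH=\acl_{\GG}(\HH)$, already noted in the introduction.

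The decision then splits into two cases. If $\acl_{\GG}(\HH)$ is not finitely generated, then since $\HH$ itself is finitely generated by hypothesis, necessarily $\HH\neq\acl_{\GG}(\HH)$ and so $\HH$ is not auto-fixed; output \emph{no}. Otherwise, Theorem~\ref{closure} has provided us with a finite basis $\mathcal{B}$ for $\acl_{\GG}(\HH)$, and since the inclusion $\HH\leqslant \acl_{\GG}(\HH)$ always holds, deciding whether $\HH$ is auto-fixed reduces to deciding the equality $\HH=\acl_{\GG}(\HH)$ of two finitely generated subgroups of $\GG$. This in turn reduces to testing, for each element of $\mathcal{B}$, membership in $\HH$; and the (uniform) membership problem for finitely generated subgroups of $\ZZ^m\times \FF_n$ is algorithmically solvable via the coset-enumeration/Stallings-type machinery recalled in Section~\ref{basics}. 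If all basis elements of $\acl_{\GG}(\HH)$ lie in $\HH$, output \emph{yes} together with the automorphisms $\Psi_1,\ldots,\Psi_{\ell}$; otherwise output \emph{no}.

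Since the hard work of Theorem~\ref{closure} (computing the generating automorphisms, deciding finite generation of the closure, computing a basis) is already available, there is no genuine additional obstacle here; the only delicate ingredient is the decidability of the membership problem for finitely generated subgroups of $\GG$, which is a standard fact in the FATF setting. The main obstacle would only appear if one wanted to refine the conclusion, for instance bounding $\ell$ in terms of $m$ and $n$ in the spirit of the bound $k\leq 2n$ in Corollary~\ref{Enric-deciding}, but this is not required by the statement. This completes the reduction and proves the corollary.
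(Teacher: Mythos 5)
Your proposal is correct and follows essentially the same reduction the paper uses for the analogous endomorphism statement (Corollary~\ref{maincor}): apply the closure theorem, use the tautology that $\HH$ is auto-fixed iff $\HH=\acl_{\GG}(\HH)$, rule out the non-finitely-generated case immediately since $\HH$ is finitely generated, and otherwise test the equality $\HH=\acl_{\GG}(\HH)$ via the decidable membership problem for finitely generated subgroups of $\ZZ^m\times\FF_n$. The only cosmetic remark is that the membership algorithm comes from Delgado--Ventura~\cite{DV} rather than being spelled out in Section~\ref{basics}, but this does not affect the argument.
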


Note that this leaves the possibility of a finitely generated subgroup $\HH\leqfg \GG$ having a non finitely generated auto-closure $\acl_{\GG}(\HH)$; however, as far as we are aware of, no such example is known, and the implication $\HH\leqfg \GG$ $\Rightarrow$ $\acl_{\GG}(\HH)\leqfg \GG$ remains an open problem, as stated in~\cite{RV}.

Sorting out the possibly non-finitely generated nature of the main involved characters, we are able to extend Theorem~\ref{closure} and Corollary~\ref{deciding} to endomorphisms of free-abelian times free groups. The main results in the present paper are the following, answering Question~5.9(ii) from Roy--Ventura~\cite{RV}:

\begin{thm}\label{main}
Let $\GG=\ZZ^m\times \FF_n$. There is an algorithm which, given a finite set of generators for a subgroup $\HH\leqfg \GG$, outputs a finite set of endomorphisms $\mathcal{S}_0 =\{\Psi_1, \ldots, \Psi_{\ell}\} \in \End(\GG)$ such that $\ecl_{\GG}(\HH)=\Fix \Psi_1 \cap \cdots \cap \Fix \Psi_{\ell}$, decides whether this is finitely generated and, in case it is, computes a basis for it. 
\end{thm}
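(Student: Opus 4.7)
The plan is to adapt the strategy of Ventura~\cite{V} from the free case to the FATF setting: reduce the endo-fixed closure to a finite intersection of auto-fixed closures over suitable intermediate subgroups, apply Theorem~\ref{closure} on each of them, and finally use Theorem~\ref{RV-fix} (extended to endomorphisms via Remark~\ref{remark}) to handle the resulting intersection. The starting point is structural. Given $\Psi\in\stab_\GG(\HH)$, the descending chain $\GG\supseteq \Psi(\GG)\supseteq \Psi^2(\GG)\supseteq\cdots$ stabilises at some \emph{stable image} $\KK_\Psi=\bigcap_{k\ge 1}\Psi^k(\GG)$, a finitely generated retract of $\GG$ (via $\rho:=(\Psi|_{\KK_\Psi})^{-N}\circ \Psi^N$, well defined once the chain stabilises at step $N$, since $\Psi|_{\KK_\Psi}$ is surjective on $\KK_\Psi$ and hence an automorphism by Hopfianness of the FATF subgroup $\KK_\Psi$); moreover $\HH\leqslant\Fix \Psi =\Fix(\Psi|_{\KK_\Psi})\leqslant \KK_\Psi$. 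Conversely, any f.g.\ retract $\KK\supseteq \HH$ with retraction $\rho_\KK$ and any $\psi\in\Aut(\KK)$ fixing $\HH$ pointwise give $\psi\circ\rho_\KK\in\stab_\GG(\HH)$ with $\Fix(\psi\circ\rho_\KK)=\Fix\psi$. Together these yield the key identity
\[
\ecl_\GG(\HH)\;=\;\bigcap_{\KK\in\mathcal{R}_\HH}\acl_\KK(\HH),
\]
where $\mathcal{R}_\HH$ denotes the set of finitely generated retracts of $\GG$ containing $\HH$.

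The heart of the argument is then a Takahasi-type finiteness result: only finitely many members of $\mathcal{R}_\HH$ (in fact only finitely many stable images $\KK_\Psi$) are needed, and they can be algorithmically enumerated. Since every finitely generated subgroup of $\GG$ is abstractly FATF of the form $\ZZ^{m'}\times \FF_{n'}$ with $m'\leqslant m$ and $n'$ bounded above in terms of the rank of the free projection of $\HH$, and since $\End(\GG)$ admits a parametric description (integer matrices together with words in $\FF_n$) that makes retractness effectively certifiable, one produces a finite list $\KK_1,\dots,\KK_t\in\mathcal{R}_\HH$ covering every stable image arising from $\Psi\in\stab_\GG(\HH)$. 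For each $\KK_i$, viewed intrinsically as a FATF group, Theorem~\ref{closure} computes automorphisms $\psi_{i,1},\dots,\psi_{i,k_i}\in\Aut(\KK_i)$ with $\acl_{\KK_i}(\HH)=\Fix\psi_{i,1}\cap\cdots\cap \Fix\psi_{i,k_i}$. Lifting each $\psi_{i,j}$ through a fixed retraction $\rho_i\colon\GG\twoheadrightarrow\KK_i$ yields $\Psi_{i,j}:=\psi_{i,j}\circ\rho_i\in\stab_\GG(\HH)$ with $\Fix\Psi_{i,j}=\Fix\psi_{i,j}$. Setting $\mathcal{S}_0:=\{\Psi_{i,j}\}_{i,j}$, the key identity gives $\ecl_\GG(\HH)=\bigcap_{i,j}\Fix\Psi_{i,j}$, as required.

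Once $\mathcal{S}_0$ is in hand, deciding whether the corresponding intersection is finitely generated and, if so, producing a basis for it is immediate from Theorem~\ref{RV-fix} in its endomorphism version (Remark~\ref{remark}), supported by Mutanguha's Theorem~\ref{computefix}. The main obstacle I anticipate is the finiteness step above: in free groups the needed combinatorial finiteness follows from the classical theory of algebraic extensions, but in $\GG$ the $\ZZ^m$-coordinates and the twisting between the free and abelian parts of a FATF subgroup (encoded by the completion homomorphism $\pi(\HH)\to \ZZ^m/(\HH\cap \ZZ^m)$) enrich substantially the space of relevant overgroups, and one must additionally isolate those overgroups that admit a retraction of $\GG$ onto them---a condition that intertwines the module-theoretic structure of the abelian part with the algebraic-extension structure on the free part, and will likely require the bulk of the technical work.
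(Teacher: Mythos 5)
Your proposal is a programme rather than a proof: the step you yourself flag as ``the bulk of the technical work'' --- producing a finite, algorithmically computable list of intermediate subgroups $\KK_1,\dots,\KK_t$ through which every $\Fix \Psi$, $\Psi\in\stab_{\GG}(\HH)$, factors --- is exactly where all the difficulty sits, and nothing in the proposal establishes it. In the free case that finiteness ultimately rests on Takahasi-type statements about algebraic extensions, but finitely generated retracts of $\GG$ containing $\HH$ form an infinite family in general, and no analogue of the required finiteness is proved (or even precisely formulated) for $\ZZ^m\times\FF_n$. Moreover, several structural claims feeding your key identity are false or unjustified as stated: the chain $\GG\supseteq\Psi(\GG)\supseteq\Psi^2(\GG)\supseteq\cdots$ need not stabilise (take $\Psi_{\id,2I,0}\colon t^{\mathbf{a}}u\mapsto t^{2\mathbf{a}}u$, whose iterated images are $\langle t^{2^k\ZZ^m}\rangle\times\FF_n$), so the retraction $(\Psi|_{\KK_\Psi})^{-N}\circ\Psi^N$ is not defined; it is not shown that the stable image is finitely generated (in a non-Howson group a descending intersection of finitely generated subgroups can fail to be so), nor that it is a retract of $\GG$; and type-(II) endomorphisms --- whose images are abelian and which genuinely contribute to $\ecl_{\GG}(\HH)$ when $\HH$ is abelian --- are not addressed at all.

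The paper's proof avoids all of this and is considerably more elementary. It first treats abelian $\HH$ by hand, exhibiting a single explicit endomorphism (of type (I) or (II) according to an arithmetic condition on $\gcd(\mathbf{z},\alpha_2)$) whose fixed subgroup is $\ecl_{\GG}(\HH)$. For non-abelian $\HH$ only type-(I) endomorphisms $\Psi_{\varphi,Q,P}$ occur, and the argument decouples the two parameters: the observation $\Fix\Psi_{\varphi,Q,P}\leqslant\Fix\Psi_{\id,Q,P}$ reduces the free coordinate to computing $\ecl_{\FF_n}(\HH\pi)$ via Theorem~\ref{thm for free endo} (Proposition~\ref{prop main}), while the admissible pairs $(Q,P)$ form the solution lattice of a homogeneous linear system that does not depend on $\varphi$, a lattice basis of which yields finitely many pairs $(Q_j,P_j)$ sufficing for the intersection (Proposition~\ref{prop B}). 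The finite family $\{\Psi_{\varphi_i,Q_j,P_j}\}$ then realises $\ecl_{\GG}(\HH)$, and the endomorphism version of Theorem~\ref{RV-fix} finishes the decision and basis computation. If you wish to pursue the retract route you would essentially have to redo Ventura's free-group reduction inside $\GG$ from scratch, which is an open-ended task rather than a corollary of the cited results.
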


\begin{cor}\label{maincor}
One can algorithmically decide whether a given $\HH\leqfg \GG$ is endo-fixed and, in case it is, compute a finite set of endomorphisms $\Psi_1, \ldots, \Psi_{\ell} \in \End(\GG)$ such that $\HH=\Fix \Psi_1 \cap \cdots \cap \Fix \Psi_{\ell}$.
\end{cor}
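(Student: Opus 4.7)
The plan is to derive Corollary~\ref{maincor} as an almost immediate consequence of Theorem~\ref{main}, together with the decidability of the subgroup membership problem in $\GG=\ZZ^m\times \FF_n$. By the very definition of $\ecl_\GG$, a subgroup $\HH\leqslant \GG$ is endo-fixed if and only if $\HH=\ecl_\GG(\HH)$, so the whole task reduces to comparing $\HH$ with its algorithmically computed endo-fixed closure.

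First, I would feed the given $\HH\leqfg \GG$ into the algorithm of Theorem~\ref{main}, which outputs a finite list of endomorphisms $\Psi_1,\ldots,\Psi_\ell\in \End(\GG)$ satisfying $\ecl_\GG(\HH)=\Fix\Psi_1\cap\cdots\cap \Fix\Psi_\ell$, and simultaneously decides whether this intersection is finitely generated. If it is \emph{not} finitely generated, then $\HH$ (which is finitely generated by hypothesis) cannot coincide with $\ecl_\GG(\HH)$, so $\HH$ is not endo-fixed and the procedure halts with a negative answer.

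If, on the other hand, $\ecl_\GG(\HH)$ is finitely generated, Theorem~\ref{main} additionally provides an explicit basis $\{b_1,\ldots,b_s\}$ for it. The containment $\HH\leqslant \ecl_\GG(\HH)$ is automatic from the definition of the closure (equivalently, $\HH\leqslant \Fix\Psi_i$ for each $i$ by construction), so the only remaining question is whether every $b_j$ lies in $\HH$. This is a finite collection of instances of the subgroup membership problem in $\GG=\ZZ^m\times \FF_n$, which is well known to be decidable: after projecting to $\FF_n$ one uses Stallings' folding technology to test membership in the free part, and the $\ZZ^m$-coordinate is then handled by a routine linear-algebra check against the abelian completion attached to the relevant Stallings vertex. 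If every $b_j$ lies in $\HH$ the algorithm answers ``yes'' and returns $\{\Psi_1,\ldots,\Psi_\ell\}$ as the required witnesses; otherwise it answers ``no''.

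The main obstacle is of course entirely concentrated in Theorem~\ref{main}: once the endo-fixed closure can be computed (including the decision of its finite generation and, when applicable, a basis), Corollary~\ref{maincor} only costs one additional membership test, which is precisely why it is stated as a short corollary of the main theorem.
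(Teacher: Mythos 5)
Your proposal is correct and follows essentially the same route as the paper: apply Theorem~\ref{main} and test whether $\ecl_{\GG}(\HH)=\HH$, using the automatic containment $\HH\leqslant\ecl_{\GG}(\HH)$. The paper's own proof is even terser (it does not spell out the non-finitely-generated case or the membership test against the computed basis, both of which you correctly supply), so your write-up is, if anything, a slightly more complete version of the same argument.
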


The extension from Theorem~\ref{closure} and Corollary~\ref{deciding} to Theorem~\ref{main} and Corollary~\ref{maincor} has to face another technical complication: the existence of the more degenerate type-(II) endomorphims (non of which is an automorphism). The paper is organized as follows. In Section~\ref{intro} we have explained the context of this line of research. In Section~\ref{basics} we introduce the general notation for free-abelian times free groups and remind the explicit form of the two types of endomorphims of such groups. In Section~\ref{abelian} we analyze the special case where $\HH$ is abelian, which is when type-(II) endomorphims have a significant role. In Section~\ref{non-abelian} we consider the case $\HH$ not abelian (the main one) where type-(II) endomorphisms play no role. Finally, in Section~\ref{open} we collect a list of questions and open problems. 

We advice the reader that the provided algorithms are theoretical and far from effective, in the sense that their complexities are quite high. A natural question is whether there exist more natural and efficient algorithms, say polynomial time, for solving such problems.

\subsection*{General notation and conventions}

For a group $G$, $\Aut(G)$ (resp., $\End(G)$) denotes the group (resp., the monoid) of automorphisms (resp., endomorphisms) of $G$. We write them all with the argument on the left, that is, we denote by $(x)\varphi$ (or simply $x\varphi$) the image of the element $x$ by the homomorphism $\varphi$; accordingly, we denote by  $\varphi \psi$ the composition $A \xrightarrow{\varphi} B \xrightarrow{\psi} C$. We will denote by $\Mat_{n\times m}(\ZZ)$ the $n\times m$ (additive) group of matrices over $\ZZ$, and by $\GL_m(\ZZ)$ the linear group over the integers. Following the same convention above, when thinking a matrix $A$ as a map, it will always act on the right of horizontal vectors, $\mathbf{v}\mapsto \mathbf{v}A$. 

To easily distinguish among them, we shall use regular letters $u,v,w,\ldots$ to refer to elements from the free group $\FF_n$, boldfaced letters $\mathbf{u}, \mathbf{v}, \mathbf{a}, \mathbf{b}, \ldots$ to refer to vectors from $\ZZ^m$, capital letters to refer to integral matrices $P\in \Mat_{n\times m}(\ZZ)$, $Q\in \Mat_{m\times m}(\ZZ)$ (and $I$ to denote the identity matrix of any dimension), lowercase greek letters $\varphi, \psi, \gamma, \sigma, \ldots$ to refer to homomorphisms (and $\id$ to refer the identity morphism) of the free group $\FF_n$, and uppercase greek letters $\Psi, \Phi, \ldots$ to refer to homomorphisms of free-abelian times free groups $\GG=\ZZ^m\times \FF_n$. Finally, we shall use usual capital letters $H,K,\ldots$ to denote subgroups of $\FF_n$, and calligraphic capitals $\HH,\KK,\ldots$ to denote subgroups of $\GG$. In particular, we shall need to work with the abelianization map $\rho\colon \FF_n \twoheadrightarrow \ZZ^n$ (w.r.t. a free ambient basis always clear from the context), for which we shall use the notation $u\mapsto \mathbf{u}:=u\rho \in \ZZ^n$.

Throughout the paper we write $H \leqslant G$ (resp. $H\leqfg G$ and $H\leqfi G$) to express that $H$ is a subgroup (resp. a finitely generated subgroup, and a finite index subgroup) of $G$, reserving the symbol $\leq$ for inequalities among real numbers.

\section{Basics on free-abelian times free groups}\label{basics}

Any direct product of a free-abelian group, $\ZZ^m$, $m\geq 0$, and a free group, $\FF_n$, $n\geq 0$, will be called, for short, a \emph{free-abelian times free} group, $\GG=\Z^m\times \FF_n$. We will work in $\GG$ with multiplicative notation (as it is a non-abelian group as soon as $n\geq 2$) but want to refer to its subgroup $\ZZ^m\leqslant \GG$ with the standard additive notation (elements thought as row vectors with addition). To make these compatible, consider the standard presentations $\ZZ^m=\langle t_1, \ldots ,t_m \mid [t_i,t_j],\,\,\, i,j=1,\ldots ,m\rangle$ and $\FF_n=\langle z_1, \ldots ,z_n \mid \,\,\rangle$, and the standard normal form for elements from $\GG$ with vectors on the left, namely $t_1^{a_1}\cdots t_m^{a_m}w(z_1, \ldots ,z_n)$, where $a_1,\ldots ,a_m\in \Z$ and $w\in \FF_n$ is a reduced word on the alphabet $Z=\{z_1, \ldots ,z_n\}$; then, let us abbreviate this in the form
 $$
t_1^{a_1}\cdots t_m^{a_m}w(z_1, \ldots ,z_n) =t^{(a_1, \ldots ,a_m)}w(z_1,\ldots ,z_n)=t^{\mathbf{a}} w(z_1,\ldots ,z_n),
 $$
where $\mathbf{a}=(a_1,\ldots ,a_m)\in \ZZ^m$ is the row vector made with the integers $a_i$'s, and $t$ is a meta-symbol serving only as a pillar for holding the vector $\mathbf{a}=(a_1,\ldots ,a_m)$ up in the exponent. This way, the operation in $\GG$ is given by $(t^{\mathbf{a}} u)(t^{\mathbf{b}} v)=t^{\mathbf{a}} t^{\mathbf{b}} uv=t^{\mathbf{a+b}} uv$ in multiplicative notation, while the abelian part works additively, as usual, up in the exponent. We denote by $\pi$ the natural projection to the free part, $\pi\colon \ZZ^m\times \FF_n \twoheadrightarrow \FF_n$, $t^{\mathbf{a}} u\mapsto u$, whose kernel is $\ZZ^m=\{t^{\mathbf{a}} \mid \mathbf{a}\in \ZZ^m\}\unlhd \GG$, the \emph{purely abelian} part of $\GG$.

According to Delgado--Ventura~\cite[Def.~1.3]{DV}, a \emph{basis} of a subgroup $\HH\leqslant \GG$ is a set of generators for $\HH$ of the form $\{ t^{\mathbf{a_1}}u_1, \ldots ,t^{\mathbf{a_r}}u_r; t^{\mathbf{b_1}},\ldots ,t^{\mathbf{b_s}} \}$, where $\mathbf{a_1}, \ldots ,\mathbf{a_r}\in \ZZ^m$, $\{ u_1, \ldots ,u_r\}$ is a free-basis of $\HH\pi\leqslant \FF_n$, and $\{ \mathbf{b_1},\ldots ,\mathbf{b_s}\}$ is an abelian-basis of $L_{\HH} =\HH\cap \Z^m \leqslant \ZZ^m$ (so, $s\leq m$); this includes the possibility that $\HH\pi\leqslant \FF_n$ is not finitely generated, in which case $r=\infty$ and the above basis is made of infinitely many elements. To avoid confusions, we reserve the word \emph{basis} in the above sense for $\GG$, in contrast with \emph{abelian-basis} and \emph{free-basis} for the corresponding concepts in $\ZZ^m$ and $\FF_n$, respectively. It was showed in~\cite[Prop.~1.9]{DV} that every subgroup $\HH\leqslant \GG$ admits a basis, algorithmically computable from any given finite set of generators in the finitely generated case. As a consequence, any subgroup $\HH\leqslant \ZZ^m\times \FF_n$ is again free-abelian times free, $\HH\simeq \ZZ^{m'}\times \FF_{n'}$, for some $0\leq m'\leq m$ and some $0\leq n'\leq \infty$; in particular, $\HH$ is finitely generated if and only if $\HH\pi\leqslant \FF_n$ is so.

We recall from Delgado--Ventura~\cite[Props.~5.1,~5.2(iii)]{DV} that every endomorphism $\Psi$ of the group $\GG=\ZZ^m \times \FF_n$, $n\geq 2$, is either of the form 
\begin{itemize}
\item[(I)] $\Psi =\Psi_{\varphi,Q,P}\colon \GG\to \GG$, $t^{\mathbf{a}}u\mapsto t^{\mathbf{a}Q+\mathbf{u}P}(u\varphi)$, where $\varphi \in \End(\FF_n)$, $Q\in \Mat_{m\times m}(\ZZ)$, $P\in \Mat_{n\times m}(\ZZ)$, and $\rho\colon \FF_n \to \ZZ^n$, $u\mapsto u\rho=\mathbf{u}$ is the abelianization map;
or
\item[(II)] $\Psi_{z,\boldsymbol{\ell} ,\mathbf{h},Q,P} \colon \GG \to \GG$, $t^{\mathbf{a}}u \mapsto t^{\mathbf{a}Q+\mathbf{u}P} z^{\mathbf{a}\boldsymbol{\ell}^T+\mathbf{u}\mathbf{h}^T}$, where $1\neq z\in \FF_n$ is not a proper power, $Q\in \Mat_{m\times m}(\ZZ)$, $P\in M_{n\times m}(\ZZ)$, $\mathbf{0}\neq \boldsymbol{\ell} \in \ZZ^m$, and $\mathbf{h}\in \ZZ^n$ ($\boldsymbol{\ell}^T, \mathbf{h}^T$ denote the vectors $\boldsymbol{\ell}, \mathbf{h}$ written as columns); in this case, $z$ is called the \emph{stable letter} of $\Psi_{z,\boldsymbol{\ell} ,\mathbf{h},Q,P}$.
\end{itemize}
It is easy to see that none of the type-(II) endomorphisms is actually an automorphism, and that a type-(I) $\Psi_{\varphi,Q,P}$ is an automorphism if and only if $\varphi\in \Aut(\FF_n)$ and $Q\in \GL_m(\ZZ)$. 

Note that a subgroup $\HH\leqslant \GG$ is abelian if and only if $\HH\pi$ is cyclic: in case $\HH\pi=1$, we have $\HH\leqslant \ZZ^m\leqslant \GG$; otherwise, $\HH\pi=\langle z^r\rangle$ for some $1\neq z\in \FF_n$ not being a proper power and some $0\neq r\in \ZZ$, and $\HH$ is then of the form $\HH=\langle t^{\mathbf{a}}z^r, t^{\mathbf{b_1}},\ldots ,t^{\mathbf{b_s}}\rangle$, where $\mathbf{a}\in\ZZ^m$ and $\{\mathbf{b_1}, \ldots ,\mathbf{b_s}\}$ is an abelian-basis of $L_{\HH}=\HH\cap \ZZ^m$. The following facts are straightforward. 

\begin{obs}\label{stable letter obs}
(i) Let $\HH=\langle t^{\mathbf{a}}z^r, t^{\mathbf{b_1}},\ldots ,t^{\mathbf{b_s}}\rangle$ be an abelian subgroup of $\GG$ not contained in $\ZZ^m$ (i.e., with $1\neq z\in \FF_n$ not being a proper power). Then any type-(II) endomorphism $\Psi$ fixing $\HH$ has stable letter $z^{\pm 1}$.

(ii) The image, and so the fixed subgroup, of any type-(II) endomorphism are abelian subgroups of $\GG$. 

(iii) If $\HH\leqslant \GG$ is not abelian then $\stab_{\GG}(\HH)$ contains no type-(II) endomorphism. 
\end{obs}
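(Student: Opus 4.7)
The plan is to prove the three items in order, with (i) doing most of the work and (ii)--(iii) following quickly.

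For (i), let $\Psi = \Psi_{z',\boldsymbol{\ell},\mathbf{h},Q,P}$ be a type-(II) endomorphism fixing $\HH$ pointwise (i.e., $\HH \leqslant \Fix\Psi$). Apply the defining formula to the generator $t^{\mathbf{a}}z^r$: writing $\mathbf{z} = z\rho \in \ZZ^n$ for the abelianization of $z$, we get
\[
\Psi\bigl(t^{\mathbf{a}}z^r\bigr) \;=\; t^{\mathbf{a}Q + r\mathbf{z}P}\,(z')^{\mathbf{a}\boldsymbol{\ell}^T + r\mathbf{z}\mathbf{h}^T}.
\]
Since this equals $t^{\mathbf{a}}z^r$, projecting to $\FF_n$ gives an equation $(z')^{k} = z^{r}$ in $\FF_n$, where $k = \mathbf{a}\boldsymbol{\ell}^T + r\mathbf{z}\mathbf{h}^T \in \ZZ$. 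The hypothesis $\HH \not\leqslant \ZZ^m$ means $r \neq 0$ and $z \neq 1$, so $z^{r} \neq 1$ and hence $k \neq 0$. Therefore $z$ and $z'$ satisfy a nontrivial relation in the free group, which forces them to commute and thus to lie in a common maximal cyclic subgroup $\langle w \rangle \leqslant \FF_n$. Writing $z = w^{p}$ and $z' = w^{q}$ and using that neither $z$ nor $z'$ is a proper power, we conclude $p, q \in \{\pm 1\}$, whence $z' = z^{\pm 1}$, as claimed.

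For (ii), inspection of the formula for $\Psi_{z,\boldsymbol{\ell},\mathbf{h},Q,P}$ shows that its image is contained in $\ZZ^m \times \langle z\rangle \leqslant \GG$, which is abelian (being the direct product of two cyclic/abelian subgroups commuting in $\GG$). Since $\Fix\Psi \subseteq \im \Psi$ (any fixed element is its own image), the fixed subgroup is also abelian.

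For (iii), this is an immediate corollary of (ii): if $\HH \leqslant \Fix\Psi$ for some type-(II) $\Psi$, then $\HH$ is contained in an abelian group, hence abelian — contradicting the hypothesis. Thus no type-(II) endomorphism can lie in $\stab_{\GG}(\HH)$.

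The only mildly delicate point is the free-group fact used in (i), namely that any nontrivial equation $(z')^{k} = z^{r}$ with $k,r \neq 0$ forces $z$ and $z'$ to be powers of a common element; everything else is a direct unwinding of the normal form and the definitions of type-(I) versus type-(II) endomorphisms recalled just above the statement.
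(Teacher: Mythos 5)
Your proof is correct and follows the natural route; the paper itself gives no proof (it labels these facts ``straightforward''), and your argument supplies exactly the details the authors leave implicit. In particular, your key free-group step in (i) --- that $(z')^{k}=z^{r}$ with $k,r\neq 0$ forces $z$ and $z'$ into a common maximal cyclic subgroup, whence $z'=z^{\pm1}$ since neither is a proper power --- is the right justification, and (ii) and (iii) follow as you say.
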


\section{Computing endo-closures in FATF groups: the abelian case}\label{abelian}

The rest of the paper is dedicated to prove Theorem~\ref{main}. Suppose we are given a finite set of generators for a finitely generated subgroup $\HH\leqfg \GG$ (with a few extra calculations, we can always assume that this set of generators is actually a finite basis for $\HH$). Let us analyze $\ecl_{\GG}(\HH)$, i.e., the intersection of the fixed subgroups of all endomorphisms of $\GG$ fixing $\HH$. Of course, among those endomorphisms there could be of both types (I) and (II). However, according to Observation~\ref{stable letter obs}(iii), when $\HH$ is not abelian no endomorphism in $\stab_{\GG}(\HH)$ is of type-(II). Hence, let us first analyze separately the case $\HH$ abelian and then, in the next section, we can forget completely about the more technical type-(II) endomorphisms. 

Let $\HH\leqslant \GG$ be an abelian subgroup.

If $\HH\leqslant \ZZ^m$ then $L_{\HH}=\HH$ and it is clear that the endo-closure of $\HH$ in $\GG$ is $\tilde{L}_{\HH}$, the smallest direct summand of $\ZZ^m$ containing $L_{\HH}$: $L_{\HH}\leqfi \tilde{L}_{\HH}\leqslant \tilde{L}_{\HH}\oplus C=\ZZ^m$. In fact, every vector in $\tilde{L}_{\HH}$ has a multiple in $L_{\HH}$ so, $\tilde{L}_{\HH}\leqslant \ecl_{\GG}(\HH)$. Conversely, $\tilde{L}_{\HH}$ is the fixed subgroup of the type-(I) automorphism $\Psi_{\sigma, Q, P}$, where $\sigma\colon \FF_n \to \FF_n$, $z_i\mapsto z_i^{-1}$ inverts every letter (so, $\Fix \sigma =\{1\}$), $Q$ fixes $\tilde{L}_{\HH}$ and changes signum over $C$ (so, $\Fix Q=\tilde{L}_{\HH}$), and $P=0$. Therefore, $\ecl_{\GG}(\HH)=\tilde{L}_{\HH}$. Using basic linear algebra techniques we can easily compute an abelian-basis for $\tilde{L}_{\HH}$ from the given (abelian-) basis for $L_{\HH}=\HH$.  

Suppose then that $\HH$ is an abelian subgroup of $\GG$ not contained in $\ZZ^m$. That is, $\HH=\langle t^{\mathbf{a}}z^r, t^{\mathbf{b_1}},\ldots ,t^{\mathbf{b_s}}\rangle\leqslant \GG$ with $1\neq z\in \FF_n$ not being a proper power, $r\neq 0$, and $\{\mathbf{b_1}, \ldots ,\mathbf{b_s}\}$ being an abelian-basis of $B=\langle \mathbf{b_1}, \ldots ,\mathbf{b_s}\rangle \leqslant \ZZ^m$, with $L_{\HH}=\HH\cap \ZZ^m=\langle t^{B}\rangle$. In this case, the following proposition gives us explicitly the endo-closure of $\HH$; to state it, we need some previous notation. 

Consider $B\leqfi \tilde{B}\leqslant \tilde{B}\oplus C=\ZZ^m$ as before, and split the vector $\mathbf{a} \in \ZZ^m$ as $\mathbf{a}=\mathbf{b}+\mathbf{c}$, with $\mathbf{b}\in \tilde{B}$ and $\mathbf{c}\in C$. If $\mathbf{a}\not\in \tilde{B}$ then $\mathbf{c}\neq \mathbf{0}$ and we let $\alpha$ be the greatest common divisor of the coordinates of $\mathbf{c}$; we have $\mathbf{c}=\alpha \mathbf{c'}$ with $\mathbf{c'}$ having relatively prime coordinates. Also, let $\alpha_1 =\gcd (\alpha, r)$; thus, we have $\alpha=\alpha_1 \alpha_2$ and $r=\alpha_1r'$, with $\gcd(\alpha_2,r')=1$. Let $\mathbf{z}\in \ZZ^n$ be the abelianization of $z\in \FF_n$ (note that $\mathbf{z}$ may be $\mathbf{0}$ even when $z\neq 1$).

\begin{prop}
Let $\HH=\langle t^{\mathbf{a}}z^r, t^{\mathbf{b_1}},\ldots ,t^{\mathbf{b_s}}\rangle\leqslant \GG$ be an abelian subgroup of $\GG$, with $1\neq z\in \FF_n$ not being a proper power, $r\neq 0$, and $\{\mathbf{b_1}, \ldots ,\mathbf{b_s}\}$ being an abelian-basis of $B$, with $L_{\HH}=\HH\cap \ZZ^m=\langle t^B\rangle$. With the above notation, we have: 
\begin{itemize}
\item[(i)] if $\mathbf{a}\in \tilde{B}$ then $\ecl_{\GG}(\HH)=\langle z,\, t^{\tilde{B}}\rangle$;
\item[(ii)] if $\mathbf{a}\not\in \tilde{B}$ and $\gcd (\mathbf{z}, \alpha_2)=1$ then $\ecl_{\GG}(\HH)=\langle t^{\alpha_2 \mathbf{c'}}z^{r'},\, t^{\tilde{B}}\rangle$;
\item[(iii)] if $\mathbf{a}\not\in \tilde{B}$ and $\gcd (\mathbf{z}, \alpha_2)\neq 1$ then $\ecl_{\GG}(\HH)=\langle z,\, t^{\tilde{B}},\, t^{\mathbf{c'}} \rangle$. 
\end{itemize}
Moreover, $\ecl_{\GG}(\HH)$ is always 1-endo-fixed and an endomorphism $\Psi\in \End(\GG)$ satisfying $\Fix\Psi=\ecl_{\GG}(\HH)$ is computable. 
\end{prop}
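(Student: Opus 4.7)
My plan is to prove each of the three cases via a double inclusion, while simultaneously exhibiting in each case a single type-(I) endomorphism whose fixed subgroup realizes the claimed right-hand side (this handles the ``$\geqslant$'' direction together with the 1-endo-fixed part). I begin with a uniform reduction that takes care of the $t^{\tilde{B}}$ summand present in all three cases: given $\mathbf{v}\in\tilde{B}$ there is $0\neq k\in\ZZ$ with $k\mathbf{v}\in B$, hence $t^{k\mathbf{v}}\in\HH$. If $\Psi\in\stab_{\GG}(\HH)$ is type-(I), fixing $t^{k\mathbf{v}}$ forces $k\mathbf{v}Q=k\mathbf{v}$; if type-(II), it additionally forces $k\mathbf{v}\boldsymbol{\ell}^T=0$. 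Torsion-freeness of $\ZZ^m$ lets us divide by $k$, so $\Psi$ already fixes $t^{\mathbf{v}}$; therefore $t^{\tilde{B}}\leqslant \ecl_{\GG}(\HH)$ unconditionally.

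Next I would analyze what fixing the remaining generator $t^{\mathbf{a}}z^r$ imposes on $\Psi$. For a type-(I) endomorphism $\Psi_{\varphi,Q,P}$, unfolding $(t^{\mathbf{a}}z^r)\Psi=t^{\mathbf{a}Q+r\mathbf{z}P}(z\varphi)^r$ yields, by uniqueness of $r$-th roots in $\FF_n$, the identity $z\varphi=z$, together with the linear equation $\mathbf{c}Q-\mathbf{c}=-r\mathbf{z}P$ (after subtracting the part in $\tilde{B}$). For type-(II), Observation~\ref{stable letter obs}(i) pins the stable letter to $z^{\pm 1}$, and an analogous unfolding produces $\mathbf{c}\boldsymbol{\ell}^T+r\mathbf{z}\mathbf{h}^T=\pm r$ together with $\mathbf{c}Q+r\mathbf{z}P=\mathbf{c}$. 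In case (i) we have $\mathbf{c}=\mathbf{0}$, and both analyses collapse to $\Psi(z)=z$, giving $\langle z, t^{\tilde{B}}\rangle \leqslant \ecl_{\GG}(\HH)$. In cases (ii) and (iii) the same equations, combined with the Diophantine data $\alpha,\alpha_1,\alpha_2,r'$ and $\mathbf{z}$, are used to pin down the smallest elements of the form $t^{k\mathbf{c'}}z^{\ell}$ that are forced into $\ecl_{\GG}(\HH)$; the dichotomy $\gcd(\mathbf{z},\alpha_2)=1$ versus $>1$ governs whether the ``pure'' element $z$ can be solved out of these combinations, producing (ii) or (iii) respectively.

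For the converse inclusion together with 1-endo-fixedness, I would construct in each case an explicit type-(I) endomorphism $\Psi_{\varphi,Q,P}$ with the required fixed subgroup. The key ingredient is a $\varphi\in\End(\FF_n)$ with $\Fix\varphi=\langle z\rangle$, which exists because $z$ is not a proper power and, as recalled in the introduction, every maximal cyclic subgroup of $\FF_n$ is endo-fixed. Choosing $Q$ to act as the identity on $\tilde{B}$ and as $-\id$ on a complement $C$ yields $\Fix Q=\tilde{B}$; an appropriate $P$ (zero in case (i), a carefully chosen rank-one perturbation in cases (ii), (iii) designed to introduce exactly the extra generator $t^{\alpha_2\mathbf{c'}}z^{r'}$ or the pair $z$, $t^{\mathbf{c'}}$) produces the desired $\Psi$ effectively from a basis of $\HH$. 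I expect the main obstacle to be the Diophantine bookkeeping in cases (ii)--(iii): one must verify that the constructed $\Psi$ has fixed subgroup \emph{exactly} the claimed lattice and, conversely, that no finer element can be coerced by any other $\Psi\in\stab_{\GG}(\HH)$; tracking this requires delicate handling of the divisibility conditions encapsulated by $\alpha_2$ and $\gcd(\mathbf{z},\alpha_2)$, and it is also where type-(II) endomorphisms genuinely enter to distinguish (iii) from (ii).
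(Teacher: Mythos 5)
Your plan is sound for case (i) and for the uniform reduction showing $t^{\tilde{B}}\leqslant \ecl_{\GG}(\HH)$, but it contains a structural error in cases (ii)--(iii), and the parts you defer as ``Diophantine bookkeeping'' are precisely where all the work lies. The decisive problem is your announced strategy of realizing the closure in \emph{every} case as the fixed subgroup of a type-(I) endomorphism: this provably cannot work in case (ii). Take $\GG=\ZZ\times\FF_2$ and $\HH=\langle t z^2\rangle$ with $z=[z_1,z_2]$, so $\mathbf{z}=\mathbf{0}$, $\alpha=\alpha_1=\alpha_2=1$, $r'=2$, and $\gcd(\mathbf{z},\alpha_2)=1$; the claimed closure is $\langle tz^2\rangle=\HH$ itself. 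Any type-(I) endomorphism $\Psi_{\varphi,Q,P}$ fixing $tz^2$ satisfies $(z\varphi)^2=z^2$, hence $z\varphi=z$, and, since $\mathbf{z}P=\mathbf{0}$, also $Q=I$; its fixed subgroup therefore contains $\langle z,\,t\rangle\supsetneq\HH$. So no type-(I) endomorphism --- indeed no intersection of fixed subgroups of type-(I) stabilizers --- can cut the closure down to $\langle t^{\alpha_2\mathbf{c'}}z^{r'},\,t^{\tilde{B}}\rangle$ when $\mathbf{z}=\mathbf{0}$ and $|r'|>1$. The paper's proof of (ii) is forced to construct a \emph{type-(II)} witness $\Psi_{z,\boldsymbol{\ell},\mathbf{h},Q,0}$, with $\mathbf{h}$ and an auxiliary integer coming from a B\'ezout identity $\mathbf{z}\mathbf{h}^T+\alpha_2\rho=1$ (this is exactly where the hypothesis $\gcd(\mathbf{z},\alpha_2)=1$ is consumed), and to verify by direct computation that its fixed subgroup is exactly the claimed lattice. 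You have the role of the type-(II) maps backwards: they are the indispensable witnesses in case (ii), whereas in case (iii) the key point is that $\gcd(\mathbf{z},\alpha_2)\neq 1$ makes the B\'ezout equation unsolvable, so $\stab_{\GG}(\HH)$ contains \emph{no} type-(II) endomorphism at all --- a claim your outline never addresses and which is the step where the hypothesis of (iii) is actually used.

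Beyond this, the proposal remains a plan rather than a proof: the assertion that the Diophantine data ``pin down the smallest elements of the form $t^{k\mathbf{c'}}z^{\ell}$ forced into the closure'' is the entire content of the proposition and is nowhere carried out, the ``carefully chosen rank-one perturbation'' $P$ is never exhibited, and you yourself flag the verification that the constructed $\Psi$ has fixed subgroup \emph{exactly} the claimed lattice as an anticipated obstacle rather than a completed step. Two smaller points: for a $\varphi$ with $\Fix\varphi=\langle z\rangle$ you can simply take the conjugation $\gamma_z\colon x\mapsto z^{-1}xz$, as the paper does, rather than appealing to endo-fixedness of maximal cyclic subgroups (which a priori only yields $\langle z\rangle$ as an intersection of fixed subgroups, not as a single one); and in case (iii) the paper's witness in fact uses $P=0$, with no perturbation needed.
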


\begin{proof}
For (i), suppose $\mathbf{a}\in \tilde{B}$ (and so, $\mathbf{c}=\mathbf{0}$). In this case, $\ecl_{\GG}(\HH)$ must contain $t^{B}$ and so, $t^{\tilde{B}}$; therefore, it also contains $z^r$ and so, $z$. Hence, $\langle z,\, t^{\tilde{B}}\rangle\leqslant \ecl_{\GG}(\HH)$. For the other inclusion, consider the type-(I) automorphism $\Psi_{\gamma_z, Q, P}$, where $\gamma_z \colon \FF_n \to \FF_n$, $x\mapsto z^{-1}xz$ is the right conjugation by $z$ (so, $\Fix \gamma_z =\langle z\rangle$, since $z$ is not a proper power), $Q\colon \ZZ^m\to \ZZ^m$ fixes $\tilde{B}$ and changes signum over $C$ (so, $\Fix Q=\tilde{B}$), and $P=0$. We have
 $$
\Fix \Psi_{\gamma_z, Q, P} =\{t^{\mathbf{a}}u \mid t^{\mathbf{a}Q}(u\gamma_z)=t^{\mathbf{a}}u\} =\langle z,\, t^{\tilde{B}}\rangle
 $$
and hence, $\ecl_{\GG}(\HH)=\langle z,\, t^{\tilde{B}}\rangle$, as we wanted to show.

Assume then that $\mathbf{a}\not\in \tilde{B}$ (and so, $\mathbf{c}\neq \mathbf{0}$). In this case, $\ecl_{\GG}(\HH)$ must contain $t^{B}$ and so, $t^{\tilde{B}}$; therefore, it also contains the element 
 $$
t^{c}z^{r}=t^{\alpha_1 \alpha_2 \mathbf{c'}}z^{\alpha_1 r'}=(t^{\alpha_2 \mathbf{c'}}z^{r'})^{\alpha_1}
 $$
and so, $t^{\alpha_2 \mathbf{c'}}z^{r'}$. Hence, $\langle t^{\alpha_2 \mathbf{c'}}z^{r'},\, t^{\tilde{B}}\rangle \leqslant \ecl_{\GG}(\HH)$. Under the extra assumption $\gcd (\mathbf{z}, \alpha_2)=1$, we are going to see that this is in fact an equality, proving (ii); otherwise, the endo-closure of $\HH$ will be a bit bigger. 

So, suppose (ii), $\gcd (\mathbf{z}, \alpha_2)=1$, and compute B\'ezout coefficients $\mathbf{h}\in \ZZ^n$ and $\rho\in \ZZ$ satisfying $\mathbf{z}\mathbf{h}^T +\alpha_2\rho =1$. Now, construct an abelian basis of $\ZZ^m$ of the form $\{ \mathbf{\tilde{b}_1},\ldots ,\mathbf{\tilde{b}_s}, \mathbf{c'}, \mathbf{c_1},\ldots ,\mathbf{c_{m-s-1}}\}$, where $\{ \mathbf{\tilde{b}_1},\ldots \mathbf{\tilde{b}_s}\}$ is an abelian basis for $\tilde{B}$ (this is possible because $\tilde{B}\oplus C=\ZZ^m$ and $\mathbf{c'}\in C$ has relatively prime coordinates); and consider the automorphism defined by $Q\colon \ZZ^m\to \ZZ^m$, $\mathbf{\tilde{b}_i}\mapsto \mathbf{\tilde{b}_i}$, $\mathbf{c'}\mapsto \mathbf{c'}$, $\mathbf{c_j}\mapsto -\mathbf{c_j}$, for $i=1,\ldots ,s$ and $j=1,\ldots ,m-s-1$. It is clear that $\Fix Q=\tilde{B}\oplus \langle \mathbf{c'}\rangle$. Construct also a vector $0\neq \boldsymbol{\ell} \in \ZZ^m$ satisfying $\mathbf{b}\boldsymbol{\ell}^T=0$ for every $\mathbf{b}\in \tilde{B}$, and $\mathbf{c'}\boldsymbol{\ell}^T=r'\rho$ (again, this is possible because $\tilde{B}\oplus C=\ZZ^m$ and $\mathbf{c'}\in C$ has relatively prime coordinates). With all these ingredients, consider the type-(II) endomorphism $\Psi_{z,\boldsymbol{\ell} ,\mathbf{h},Q,0} \colon \GG \to \GG$, $t^{\mathbf{a}}u \mapsto t^{\mathbf{a}Q} z^{\mathbf{a}\boldsymbol{\ell}^T+\mathbf{u}\mathbf{h}^T}$. We have that 
 $$
\Fix \Psi_{z,\boldsymbol{\ell} ,\mathbf{h},Q,0}=\{ t^{\mathbf{a}}u \mid \mathbf{a}Q=\mathbf{a}, \phantom{a} u=z^{\mathbf{a}\boldsymbol{\ell}^T+\mathbf{u}\mathbf{h}^T}\}.
 $$
Note that the first condition tells us that $\mathbf{a}=\mathbf{b}+\lambda \mathbf{c'}$, for some $\mathbf{b}\in \tilde{B}$ and some $\lambda\in \ZZ$; and, abelianizing the second one, we get $\mathbf{u}=(\mathbf{a}\boldsymbol{\ell}^T+\mathbf{u}\mathbf{h}^T)\mathbf{z}$. Therefore, 
 $$
\mathbf{a}\boldsymbol{\ell}^T+\mathbf{u}\mathbf{h}^T=(\mathbf{b}+ \lambda \mathbf{c'})\boldsymbol{\ell}^T+ (\mathbf{a} \boldsymbol{\ell}^T+ \mathbf{u}\mathbf{h}^T)\mathbf{z}\mathbf{h}^T = \lambda r'\rho + (\mathbf{a} \boldsymbol{\ell}^T+ \mathbf{u}\mathbf{h}^T)(1-\alpha_2 \rho )
 $$
and so, $\alpha_2 (\mathbf{a} \boldsymbol{\ell}^T +\mathbf{u}\mathbf{h}^T)=\lambda r'$. Since $\alpha_2$ and $r'$ are relatively prime, $\lambda$ must be multiple of $\alpha_2$, i.e., $\lambda\in \alpha_2 \ZZ$. From this we deduce that 
 $$
\begin{array}{cl}
\Fix \Psi_{z,\boldsymbol{\ell} ,\mathbf{h},Q,0} & =\{t^{\mathbf{b}+\lambda \mathbf{c'}} z^{\frac{\lambda}{\alpha_2} r'} \mid \mathbf{b}\in \tilde{B},\,\, \lambda \in \alpha_2\ZZ \}\\
& = \langle t^{\tilde{B}},\,\, t^{\lambda \mathbf{c'}} z^{\frac{\lambda}{\alpha_2} r'} \mid \lambda \in \alpha_2\ZZ \rangle \\ & = \langle t^{\alpha_2 \mathbf{c'}} z^{r'},\,\, t^{\tilde{B}}\rangle.
\end{array}
 $$
Thus, $\ecl_{\GG}(\HH) =\langle t^{\alpha_2 \mathbf{c'}} z^{r'},\,\, t^{\tilde{B}}\rangle$, as we wanted to see.

Finally, suppose (iii), i.e., $\gcd (\mathbf{z}, \alpha_2)\neq 1$. In this situation, we claim that $\stab_{\GG}(\HH)$ contains only type-(I) endomorphisms. In fact, suppose that a type-(II) endomorphism $\Psi$ fixes $\HH$, and let us find a contradiction. By Observation~\ref{stable letter obs}(i), its stable letter mut be $z^{\epsilon}$, $\epsilon=\pm 1$, and so, $\Psi=\Psi_{z^{\epsilon},\boldsymbol{\ell} ,\mathbf{h},Q,P}$, for some integral vectors $\mathbf{0}\neq \boldsymbol{\ell}\in \ZZ^m$ and $\mathbf{h}\in \ZZ^n$, and some integral matrices $Q\in \Mat_{m\times m}(\ZZ)$ and $P\in \Mat_{n\times m}(\ZZ)$. By the argument above, $\Psi_{z,\boldsymbol{\ell} ,\mathbf{h},Q,P}$ must also fix the element 
 $$
t^{\alpha_2 \mathbf{c'}} z^{r'} \mapsto t^{\alpha_2 \mathbf{c'}Q+r'\mathbf{z}P} z^{\epsilon (\alpha_2 \mathbf{c'}\boldsymbol{\ell}^T+ r'\mathbf{z}\mathbf{h}^T)};
 $$
in particular, $\alpha_2\mathbf{c'}\boldsymbol{\ell}^T + r'\mathbf{z}\mathbf{h}^T= \epsilon r'$ and $r' \mid \alpha_2\mathbf{c'}\boldsymbol{\ell}^T$. But $\gcd(\alpha_2, r')=1$ so, $r' \mid \mathbf{c'}\boldsymbol{\ell}^T$, i.e., $\mathbf{c'}\boldsymbol{\ell}^T=\mu r'$ for some $\mu\in \ZZ$. Now, simplifying $r'$ from the last equation, we have $\alpha_2\mu+\mathbf{z}\mathbf{h}^T=\epsilon$, which is a contradiction with the assumption $\gcd(\mathbf{z}, \alpha_2)\neq 1$. This proves the claim. 

Finally, knowing that $\ecl_{\GG}(\HH)$ contains no type-(II) endomorphism, we can improve our current inclusion $\langle t^{\alpha_2 \mathbf{c'}}z^{r'},\,\, t^{\tilde{B}} \rangle \leqslant \ecl_{\GG}(\HH)$: any endomorphism fixing $\HH$ is of type-(I) and also fixes $t^{\alpha_2 \mathbf{c'}}z^{r'}$ hence, it must fix $z^{r'}$ and so, $z$ as well; therefore, it must further fix $t^{\alpha_2 \mathbf{c'}}$ and hence, $t^{\mathbf{c'}}$. Summarizing, $\langle z,\, t^{\tilde{B}},\, t^{\mathbf{c'}} \rangle \leqslant \ecl_{\GG}(\HH)$. For the other inclusion, consider the type-(I) automorphism $\Psi_{\gamma_z, Q, P}$, where $\gamma_z \colon \FF_n \to \FF_n$, $x\mapsto z^{-1}xz$ is the right conjugation by $z$, $Q\colon \ZZ^m\to \ZZ^m$ is the automorphism from case (ii), and $P=0$; it is straightforward to see that $\Fix \Psi_{\gamma_z, Q, P}=\langle z,\, t^{\tilde{B}},\, t^{\mathbf{c'}} \rangle$. Therefore, $\ecl_{\GG}(\HH)=\langle z,\, t^{\tilde{B}},\, t^{\mathbf{c'}} \rangle$, as we wanted to prove.

To conclude the proof, observe that in the three cases $\ecl_{\GG}(\HH)$ has been obtained as the fixed subgroup of a single endomorphism $\Psi\in \End(\GG)$ explicitly determined by $\HH$. 
\end{proof}

This proves (a bit more than) the special case of Theorem~\ref{main} for abelian subgroups $\HH$. 

\begin{cor}
Let $\GG=\ZZ^m\times \FF_n$. If $\HH\leqslant \GG$ is abelian then $\ecl_{\GG}(\HH)$ is finitely generated and 1-endo-fixed. Moreover, from a given finite set of generators for $\HH$, one can algorithmically compute a basis for $\ecl_{\GG}(\HH)$ and an endomorphism $\Psi\in \End(\GG)$ such that $\ecl_{\GG}(\HH)=\Fix \Psi$. \qed
\end{cor}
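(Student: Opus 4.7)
The plan is to assemble the corollary from the two pieces already developed in Section~\ref{abelian}. Given a finite generating set of an abelian subgroup $\HH\leqfg \GG$, my first step is to invoke~\cite[Prop.~1.9]{DV} to compute a basis of $\HH$ in the standard form, and then dichotomize according to whether $\HH\pi=\{1\}$ or not; this dichotomy is decidable because $\HH\pi$ is generated by the projections of the basis elements, and checking whether all these lie in the trivial subgroup of $\FF_n$ is trivial.

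If $\HH\leqslant \ZZ^m$, then $\HH=L_{\HH}$ and I would apply the argument preceding the proposition: compute the saturation $\tilde L_{\HH}$ (the smallest direct summand of $\ZZ^m$ containing $L_{\HH}$) via Smith normal form on the $s\times m$ matrix of abelian-basis vectors, producing both an abelian-basis of $\tilde L_{\HH}$ and a complement $C$ with $\tilde L_{\HH}\oplus C=\ZZ^m$. The explicit type-(I) automorphism $\Psi_{\sigma,Q,0}$ described in the text, with $\sigma$ inverting each free generator and $Q=\id\oplus(-\id)$ with respect to the decomposition $\tilde L_{\HH}\oplus C$, then witnesses that $\ecl_{\GG}(\HH)=\tilde L_{\HH}=\Fix\Psi_{\sigma,Q,0}$.

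If $\HH\not\leqslant \ZZ^m$, then $\HH\pi$ is a non-trivial cyclic subgroup of $\FF_n$, and I would compute its generator $z^r$ (with $z$ not a proper power) using standard free-group routines: find a generator of $\HH\pi$ by Nielsen reduction, then extract the maximal root $z$ and the exponent $r$. After rewriting $\HH$ in the form $\langle t^{\mathbf a}z^r,\,t^{\mathbf{b_1}},\ldots,t^{\mathbf{b_s}}\rangle$, I would compute $B=\langle \mathbf{b_1},\ldots,\mathbf{b_s}\rangle$, its saturation $\tilde B$, the splitting $\mathbf a=\mathbf b+\mathbf c$ with $\mathbf b\in\tilde B$ and $\mathbf c\in C$, the gcd $\alpha$ of the coordinates of $\mathbf c$, the vector $\mathbf{c'}=\mathbf c/\alpha$, and the quantities $\alpha_1=\gcd(\alpha,r)$, $\alpha_2=\alpha/\alpha_1$, $r'=r/\alpha_1$; all of these are elementary number-theoretic and linear-algebra computations. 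Then I would check the three conditions of the proposition ($\mathbf a\in\tilde B$; or $\mathbf a\notin\tilde B$ and $\gcd(\mathbf z,\alpha_2)=1$; or $\mathbf a\notin\tilde B$ and $\gcd(\mathbf z,\alpha_2)\neq 1$), and in the appropriate case read off both the basis for $\ecl_{\GG}(\HH)$ and the single endomorphism $\Psi$ with $\Fix\Psi=\ecl_{\GG}(\HH)$ produced in the proof of the proposition.

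The only potential obstacle is purely mechanical: one must verify that every gadget called for by the proposition (the basis $\{\tilde{\mathbf b_1},\ldots,\tilde{\mathbf b_s},\mathbf{c'},\mathbf c_1,\ldots,\mathbf c_{m-s-1}\}$ of $\ZZ^m$ in case (ii), the B\'ezout coefficients $\mathbf h$ and $\rho$ with $\mathbf z\mathbf h^T+\alpha_2\rho=1$, and the vector $\boldsymbol\ell$ with the prescribed inner products) can be built algorithmically. But the existence in each case rests on $\mathbf{c'}$ having coprime coordinates (so it extends to an integral basis of $C$, computable by Smith normal form) and on $\gcd(\mathbf z,\alpha_2)=1$ (so extended Euclidean algorithm produces $\mathbf h$ and $\rho$); once these pieces are in hand, the explicit endomorphism $\Psi$ is written down directly. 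Finiteness of generation and $1$-endo-fixedness of $\ecl_{\GG}(\HH)$ then follow in every case from the explicit forms exhibited.
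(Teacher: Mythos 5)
Your proposal is correct and follows essentially the same route as the paper: the corollary is obtained by splitting into the case $\HH\leqslant\ZZ^m$ (handled by the explicit automorphism $\Psi_{\sigma,Q,0}$ with $\Fix Q=\tilde L_{\HH}$ described before the proposition) and the case $\HH\not\leqslant\ZZ^m$ (handled by the three cases of the proposition, each of which exhibits $\ecl_{\GG}(\HH)$ explicitly as the fixed subgroup of a single computable endomorphism). Your additional remarks on the effective computability of each ingredient (Smith normal form for the saturation and the complement, root extraction for $z$ and $r$, extended Euclid for the B\'ezout data) are exactly the routine verifications the paper leaves implicit.
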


\section{Computing endo-closures in FATF groups: the non-abelian case}\label{non-abelian}

In this section we concentrate on the case $\HH$ not abelian. As argued above, this implies that $\stab_{\GG}(\HH)$ contains only type-(I) endomorphism so, we can ignore those of type-(II). Recall that a type-(I) endomorphism is of the form $\Psi=\Psi_{\varphi,Q,P}\colon \GG\to \GG$, $t^{\mathbf{a}}u\mapsto t^{\mathbf{a}Q+\mathbf{u}P}(u\varphi)$, where $\varphi \in \End(\FF_n)$, $Q\in \Mat_{m\times m}(\ZZ)$, and $P\in \Mat_{n\times m}(\ZZ)$. For notational convenience, we write $\Psi\pi=\varphi$. The following are a couple of initial observations about type-(I) endomorphisms.  

\begin{lem}\label{prop A}
Let $\HH\leqfg \GG$ be non-abelian. Then, $(\stab_{\mathcal{G}}(\mathcal{H}))\pi =\stab_{\FF_n}(\mathcal{H}\pi)$.
\end{lem}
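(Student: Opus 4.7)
My plan is to prove the two inclusions separately, using Observation~\ref{stable letter obs}(iii) to rule out type-(II) endomorphisms and then a trivial lift for the reverse inclusion.

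For the inclusion $(\stab_{\GG}(\HH))\pi \subseteq \stab_{\FF_n}(\HH\pi)$, I would first invoke Observation~\ref{stable letter obs}(iii), which says that since $\HH$ is non-abelian, every $\Psi \in \stab_{\GG}(\HH)$ is of type-(I), say $\Psi = \Psi_{\varphi,Q,P}$, so the shorthand $\Psi\pi = \varphi$ makes sense. Then for any generator (indeed any element) $t^{\mathbf{a}}u \in \HH$, the fixing condition $\Psi(t^{\mathbf{a}}u) = t^{\mathbf{a}Q+\mathbf{u}P}(u\varphi) = t^{\mathbf{a}}u$ projects via $\pi$ to $u\varphi = u$. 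Since every $v \in \HH\pi$ is $v = (t^{\mathbf{a}}u)\pi = u$ for some $t^{\mathbf{a}}u \in \HH$, we conclude $v\varphi = v$ for all $v \in \HH\pi$, i.e., $\varphi \in \stab_{\FF_n}(\HH\pi)$.

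For the reverse inclusion $\stab_{\FF_n}(\HH\pi) \subseteq (\stab_{\GG}(\HH))\pi$, the idea is to exhibit a canonical lift. Given $\varphi \in \stab_{\FF_n}(\HH\pi)$, define the type-(I) endomorphism $\Psi := \Psi_{\varphi, I, 0} \in \End(\GG)$, i.e., $t^{\mathbf{a}}u \mapsto t^{\mathbf{a}}(u\varphi)$. This is a well-defined endomorphism for any choice of $\varphi$ and satisfies $\Psi\pi = \varphi$. It remains to check that $\Psi$ fixes $\HH$: if $t^{\mathbf{a}}u \in \HH$, then $u \in \HH\pi$, whence $u\varphi = u$ by hypothesis, and therefore $\Psi(t^{\mathbf{a}}u) = t^{\mathbf{a}}u$. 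Thus $\Psi \in \stab_{\GG}(\HH)$ and $\varphi = \Psi\pi \in (\stab_{\GG}(\HH))\pi$.

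There is no real obstacle here; the lemma is essentially a bookkeeping statement. The only subtlety is that the non-abelian hypothesis on $\HH$ is what allows us to identify $\Psi\pi$ unambiguously with $\varphi$ (via Observation~\ref{stable letter obs}(iii))---without it, type-(II) endomorphisms in $\stab_{\GG}(\HH)$ would spoil the equality since their ``free part'' has cyclic image and would not in general give all of $\stab_{\FF_n}(\HH\pi)$ on the projection side.
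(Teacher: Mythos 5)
Your proof is correct and follows essentially the same route as the paper: project the fixing condition of a type-(I) endomorphism to get one inclusion, and lift $\varphi$ to $\Psi_{\varphi,I,0}$ for the other. The only (harmless) difference is that you explicitly cite Observation~\ref{stable letter obs}(iii) to justify that $\Psi\pi$ is well defined on $\stab_{\GG}(\HH)$, a point the paper handles in the discussion preceding the lemma.
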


\begin{proof}
Let $\varphi \in (\stab_{\mathcal{G}}(\mathcal{H}))\pi$. This means that there exists $Q\in \Mat_{m\times m}(\Z)$ and $P\in \Mat_{n\times m}(\Z)$ such that $\Psi_{\varphi, Q, P} \in \stab_{\mathcal{G}}(\mathcal{H})$. Hence $\mathcal{H} \leqslant \Fix {\Psi_{\varphi, Q, P}}$ and so, $\mathcal{H}\pi \leqslant \Fix \varphi$. Thus, $\varphi \in \stab_{\FF_n}(\mathcal{H}\pi)$.

To prove the other inclusion, let $\varphi \in \stab_{\FF_n}(\mathcal{H}\pi)$. Take $Q=I$ and $P=0$, and let us consider the type-(I) endomorphism $\Psi_{\varphi, I, 0}\in \End(\GG)$. For every $t^{\mathbf{a}}u\in \HH$, we have $u\in \mathcal{H}\pi$ and so, $u\varphi=u$ and $(t^{\mathbf{a}}u )\Psi_{\varphi, I, 0}=t^{\mathbf{a}}u$. Therefore, $\mathcal{H} \leqslant \Fix \Psi_{\varphi, I, 0}$, i.e., $\Psi_{\varphi, I, 0}\in \stab_{\mathcal{G}}(\mathcal{H})$. We deduce that $\varphi =(\Psi_{\varphi, I, 0})\pi \in (\stab_{\mathcal{G}}(\mathcal{H}))\pi$.
\end{proof}

\begin{lem}
Let $\HH\leqfg \GG$ be non-abelian. Then, $(\ecl_{\mathcal{G}}(\mathcal{H}))\pi \leqslant \ecl_{\FF_n}(\mathcal{H}\pi)$, with the equality being not true, in general. 
\end{lem}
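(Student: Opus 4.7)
The plan is to split the statement in two: first establish the inclusion via Lemma~\ref{prop A}, then construct a small explicit counterexample showing equality can fail.

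For the inclusion, I would fix $w\in \ecl_{\GG}(\HH)$ and $\varphi\in \stab_{\FF_n}(\HH\pi)$, and use the constructive part of Lemma~\ref{prop A} to lift $\varphi$ to $\Psi:=\Psi_{\varphi,I,0}\in \stab_{\GG}(\HH)$. The key observation is that the type-(I) formula $\Psi_{\varphi,I,0}(t^{\mathbf{a}}u)=t^{\mathbf{a}}(u\varphi)$ yields the identity $\Psi\pi=\pi\varphi$ as maps $\GG\to \FF_n$. Since $w\in \Fix\Psi$, applying $\pi$ to $w\Psi=w$ gives $(w\pi)\varphi=w\pi$, so $w\pi\in \Fix\varphi$. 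As $\varphi$ was arbitrary, $w\pi\in \ecl_{\FF_n}(\HH\pi)$, giving the inclusion.

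For the counterexample, I would work in $\GG=\ZZ\times \FF_2=\langle t\rangle\times \langle a,b\rangle$ with the finitely generated non-abelian subgroup $\HH=\langle ta^2, b\rangle$. Here $\HH\pi=\langle a^2, b\rangle$, and any $\varphi\in \stab_{\FF_2}(\HH\pi)$ must send $a\mapsto a$ (by uniqueness of square roots in $\FF_2$) and $b\mapsto b$; hence $\stab_{\FF_2}(\HH\pi)=\{\id\}$ and $\ecl_{\FF_2}(\HH\pi)=\FF_2$. On the $\GG$-side, direct analysis of the equations $\Psi(ta^2)=ta^2$ and $\Psi(b)=b$ forces $\varphi=\id$, $p_2=0$, and $Q+2p_1=1$ (writing $P=\binom{p_1}{p_2}$), producing the one-parameter family $\stab_{\GG}(\HH)=\{\Psi_{\id,\,1-2p_1,\,\binom{p_1}{0}} : p_1\in \ZZ\}$. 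For any $p_1\neq 0$ the fixed subgroup equals $\{t^{|u|_a/2}u : u\in \FF_2,\, |u|_a\text{ even}\}$, which is therefore exactly $\ecl_{\GG}(\HH)$. Its projection $\{u\in \FF_2 : |u|_a\text{ even}\}$ is properly contained in $\FF_2=\ecl_{\FF_2}(\HH\pi)$ (for instance $a$ lies outside), witnessing strict inclusion.

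The inclusion step is essentially free once Lemma~\ref{prop A} and the identity $\Psi\pi=\pi\varphi$ are in hand; the main obstacle will be designing the counterexample. The guiding idea is to pair the $t$-exponent of a generator with a divisibility constraint coming from the free part (the square on $a$), so that the defining linear condition $Q+2p_1=1$ prevents $P=0$ whenever $Q\neq 1$. This generates extra type-(I) endomorphisms in $\stab_{\GG}(\HH)$ that are invisible on the $\FF_n$-side (where $\stab_{\FF_2}(\HH\pi)$ has collapsed to $\{\id\}$) and force $(\ecl_{\GG}(\HH))\pi$ down to a proper subgroup of $\ecl_{\FF_n}(\HH\pi)$.
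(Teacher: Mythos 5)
Your proof is correct and follows essentially the same route as the paper: the inclusion via lifting $\varphi$ to $\Psi_{\varphi,I,0}$ using Lemma~\ref{prop A}, and the counterexample $\HH=\langle ta^2,b\rangle$ in $\ZZ\times\FF_2$ (the paper exhibits only the single witness $\Psi_{\id,-1,\binom{1}{0}}$ to exclude $a$, whereas you compute the full stabilizer and the exact closure, which is more than needed but accurate).
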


\begin{proof}
Let $u\in (\ecl_{\mathcal{G}}(\mathcal{H}))\pi$. Then, there exists a vector $\mathbf{a}\in \Z^m$ such that $t^{\mathbf{a}}u \in \ecl_{\mathcal{G}}(\mathcal{H})$ and so $(t^{\mathbf{a}}u)\Psi=t^{\mathbf{a}}u$ for all $\Psi\in \stab_{\mathcal{G}}(\mathcal{H})$. Let us take now an arbitrary $\varphi \in \stab_{\FF_n}(\mathcal{H}\pi)$. In Lemma~\ref{prop A} we proved that $\Psi_{\varphi, I, 0} \in \stab_{\mathcal{G}}(\mathcal{H})$. Therefore, $t^{\mathbf{a}}u \in \Fix \Psi_{\varphi, I, 0}$ and $u \in \Fix \varphi$. We deduce that $u\in \ecl_{\FF_n}(\mathcal{H}\pi)$. This completes the proof of the inclusion. 

As a counterexample for the other inclusion, consider $\GG=\ZZ\times \FF_2=\langle t\rangle \times \langle a,b\rangle$ and the non-abelian subgroup $\HH=\langle ta^2,\, b\rangle$. It is clear that $\HH\pi=\langle a^2, b\rangle$ and $\ecl_{\FF_2}(\HH\pi)=\langle a, b\rangle$. However, $a\not\in (\ecl_{\GG}(\HH))\pi$: in fact, if that was the case, there should exist an integer $\textbf{x}\in \ZZ$ such that $t^{\textbf{x}}a\in \ecl_{\GG}(\HH)$; but the endomorphism $\Psi_{\id, Q, P}\in \End(\GG)$ with $Q=(-1)$ and $P=\binom{1}{0}$, fixes both $ta^2\mapsto t^{1\cdot (-1)+(2,0)P}a^2=ta^2$ and $b\mapsto t^{\textbf{0}\cdot (-1)+\textbf{(0,1)}P}b=b$ so it would also fix $t^{\mathbf{x}}a\mapsto t^{\textbf{x}\cdot (-1)+\textbf{(1,0)}P}a=t^{\textbf{-x+1}}a$, meaning $2\textbf{x}=1$, which is a contradiction.
\end{proof}

We are ready to start the proof of the main result in the non-abelian case.  

\begin{prop}\label{prop main}
Let $\HH \leqfg \GG$ be non-abelian (given by a a finite set of generators). Then, there is a computable family of endomorphisms $S_0=\{ \id =\varphi_0, \varphi_1, \ldots ,\varphi_k\}\subseteq \stab_{\FF_n}(\HH\pi)$, $k\leq 2n$, such that
 $$
\ecl_{\mathcal{G}}(\mathcal{H})= \bigcap_{{\tiny \begin{array}{c} \mathcal{H} \leqslant \Fix {\Psi_{\varphi, Q, P}} \end{array}}} \Fix \Psi_{\varphi, Q, P} =\bigcap_{{\tiny \begin{array}{c} \varphi \in S_0 \\ \mathcal{H} \leqslant \Fix {\Psi_{\varphi, Q, P}} \end{array}}} \Fix \Psi_{\varphi, Q, P}.
 $$
\end{prop}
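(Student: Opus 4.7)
The first equality is an immediate consequence of Observation~\ref{stable letter obs}(iii): since $\HH$ is non-abelian, $\stab_{\GG}(\HH)$ contains no type-(II) endomorphism, so restricting the intersection defining $\ecl_{\GG}(\HH)$ to type-(I) endomorphisms $\Psi_{\varphi,Q,P}$ changes nothing. The bulk of the work is producing a finite computable set $S_0$ realizing the second equality.

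The plan is to decouple the condition $\HH \leqslant \Fix\Psi_{\varphi,Q,P}$ into a \emph{free-part} constraint on $\varphi$ and an \emph{abelian-part} constraint on $(Q,P)$, and then reduce to the already-known free-group version, Theorem~\ref{thm for free endo}. For each generator $t^{\mathbf{b}}v$ of $\HH$, the equation $(t^{\mathbf{b}}v)\Psi_{\varphi,Q,P} = t^{\mathbf{b}}v$ amounts to $v\varphi = v$ \emph{and} $\mathbf{b}(Q-I) + \mathbf{v}P = \mathbf{0}$. Collecting the former over all generators gives $\varphi \in \stab_{\FF_n}(\HH\pi)$ (Lemma~\ref{prop A}), while the latter assemble into a $\ZZ$-submodule
$$\Gamma \leqslant \Mat_{m\times m}(\ZZ) \times \Mat_{n\times m}(\ZZ)$$
depending only on $\HH$. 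Consequently, the index set of the left-hand intersection is exactly the Cartesian product $\stab_{\FF_n}(\HH\pi) \times \Gamma$, noting in particular that $(I,0)\in \Gamma$, so every $\varphi \in \stab_{\FF_n}(\HH\pi)$ does extend to some element of $\stab_{\GG}(\HH)$.

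Next, I compute $\HH\pi \leqfg \FF_n$ by projecting the given generators of $\HH$, and apply the effective version of Theorem~\ref{thm for free endo} to produce $\varphi_1, \ldots, \varphi_k \in \stab_{\FF_n}(\HH\pi)$ with $k \leq 2n$ and $\ecl_{\FF_n}(\HH\pi) = \Fix\varphi_1 \cap \cdots \cap \Fix\varphi_k$. Set $S_0 = \{\id, \varphi_1, \ldots, \varphi_k\}$ (adding $\id$ is harmless because $\Fix\id = \FF_n$). From the explicit description
$$\Fix\Psi_{\varphi,Q,P} = \{t^{\mathbf{a}}u : u\varphi = u,\ \mathbf{a}(Q-I) + \mathbf{u}P = \mathbf{0}\},$$
intersecting over $\stab_{\FF_n}(\HH\pi) \times \Gamma$ picks the free part $u$ from $\ecl_{\FF_n}(\HH\pi)$ plus the linear constraints from $\Gamma$, while intersecting over $S_0 \times \Gamma$ picks $u$ from $\bigcap_{j=0}^{k} \Fix\varphi_j$ plus the very same linear constraints. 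By the defining property of $S_0$ these two subgroups of $\FF_n$ coincide, hence the two intersections in $\GG$ are equal.

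I expect the main technical point to be the product decomposition in the second paragraph: one must verify both that the abelian constraints captured by $\Gamma$ are genuinely independent of $\varphi$ (which hinges on the multiplicative form of a type-(I) endomorphism) and that every $\varphi \in \stab_{\FF_n}(\HH\pi)$ actually arises as $\Psi\pi$ for some $\Psi \in \stab_{\GG}(\HH)$ (already recorded in Lemma~\ref{prop A}). Once this decoupling is in hand, the rest is a clean combination of Lemma~\ref{prop A} with Theorem~\ref{thm for free endo}, and computability of $S_0$ follows from the effectiveness of both ingredients.
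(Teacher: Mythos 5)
Your proof is correct and follows essentially the same route as the paper: the same set $S_0$ obtained from Theorem~\ref{thm for free endo} augmented by $\id$, Lemma~\ref{prop A}, and the key observation that the abelian-part constraint on $(Q,P)$ coming from $\HH\leqslant\Fix\Psi_{\varphi,Q,P}$ is independent of $\varphi$ --- which the paper packages as the claim $\Fix\Psi_{\varphi,Q,P}\leqslant\Fix\Psi_{\id,Q,P}$ plus an element chase, and you package more transparently as the product decomposition $\stab_{\FF_n}(\HH\pi)\times\Gamma$ of the index set. The one imprecision is that $\Gamma$ is an affine coset $(I,0)+\Gamma_0$ of a submodule (in the variables $(Q-I,P)$) rather than a submodule of $\Mat_{m\times m}(\ZZ)\times\Mat_{n\times m}(\ZZ)$ as written --- $(0,0)\in\Gamma$ would force $\mathbf{b_j}Q=\mathbf{b_j}$ with $Q=0$ --- but your argument only uses the decoupling and the fact that $(I,0)\in\Gamma$, so nothing breaks.
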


\begin{proof}
The first equality is just the definition of endo-closure of $\HH$, taking into account the fact that, since $\HH$ is not abelian, $\stab_{\GG}(\HH)$ contains no type-(II) endomorphism. 

Consider the projection subgroup $\HH\pi\leqslant \FF_n$. Using  Theorem~\ref{thm for free endo}, compute $k\leq 2n$ endomorphisms $\varphi_1, \ldots, \varphi_k \in \End(\FF_n)$ such that $\ecl_{\FF_n}(\HH \pi)= \Fix \varphi_1 \cap \cdots \cap \Fix \varphi_k$; and let $S_0=\{\varphi_0, \varphi_1, \ldots, \varphi_k\}$, where $\varphi_0$ is the identity. Obviously, $\Fix S_0=\ecl_{\FF_n}(\mathcal{H}\pi)$.

It is clear that there may be endomorphisms of $\mathcal{G}$ fixing $\mathcal{H}$ but having their projections outside $S_0$. This fact leads to the inclusion  
 \begin{equation}\label{inclusion2}
\bigcap_{{\tiny \begin{array}{c} \mathcal{H} \leqslant \Fix {\Psi_{\varphi, Q, P}} \end{array}}}  \Fix \Psi_{\varphi, Q, P} \,\,\leqslant \bigcap_{{\tiny \begin{array}{c} \varphi \in S_0 \\ \mathcal{H} \leqslant \Fix {\Psi_{\varphi, Q, P}} \end{array}}} \Fix \Psi_{\varphi, Q, P}.
 \end{equation}
Now we claim that, for any $Q\in \Mat_{m\times m}(\Z)$ and $P\in \Mat_{n\times m}(\Z)$, we have $\Fix \Psi_{\varphi, Q, P}\leqslant \Fix \Psi_{\id, Q, P}$. In fact, for any $t^{\mathbf{a}}u\in \Fix \Psi_{\varphi, Q, P}$, we have ${\mathbf{a}Q+\mathbf{u}P}=\mathbf{a}$ and $u\varphi=u$, and hence $(t^{\mathbf{a}}u) \Psi_{\id, Q, P}=t^{\mathbf{a}Q+ \mathbf{u}P}u=t^{\mathbf{a}}u$ so, $t^{\mathbf{a}}u\in \Fix \Psi_{\id, Q, P}$. This proves the claim. In particular, if $\Psi_{\varphi, Q,P}\in \stab_{\GG}(\HH)$ then $\Psi_{\id, Q, P}\in \stab_{\GG}(\HH)$ as well. 

We are ready to prove the opposite inequality from Eq.~\eqref{inclusion2}. Let $t^{\mathbf{a}}u$ be an arbitrary element from the right hand side, i.e., fixed by all the endomorphisms $\Psi_{\varphi, Q, P}\in \stab_{\GG}(\HH)$ with $\Psi\pi=\varphi \in S_0$. In particular, $u\varphi=u$ for all $\varphi \in S_0$, which implies $u\in \Fix(S_0)=\ecl_{\FF_n}(\HH\pi)$ and, by Lemma~\ref{prop A}, $u\varphi=u$ for all $\varphi \in (\stab_{\GG}(\HH))\pi$. Now let $\Psi_{\varphi', Q', P'}$ be an arbitrary endomorphism in $\stab_{\GG}(\HH)$; in particular, $u\varphi'=u$. From the previous claim, $\Psi_{\id, Q', P'}\in \stab_{\GG}(\HH)$. But $(\Psi_{\id, Q', P'})\pi=\id \in S_0$ hence, $t^{\mathbf{a}}u \in \Fix\Psi_{\id, Q', P'}$ and so, $\mathbf{a}Q'+ \mathbf{u}P'=\mathbf{a}$. Altogether, we have $u\varphi'=u$ and $\mathbf{a}Q'+ \mathbf{u}P'=\mathbf{a}$ which in turn implies that $t^{\mathbf{a}}u \in \Fix \Psi_{\varphi', Q', P'}$. We deduce that $t^{\mathbf{a}}u \in \bigcap_{\tiny \,\, \HH\leqslant \Fix {\Psi_{\varphi, Q, P}}} \Fix \Psi_{\varphi, Q, P} =\ecl_{\GG}(\HH)$, concluding the proof. 
\end{proof}

\begin{prop}\label{prop B}
Let $\HH\leqfg \GG$ be a non-abelian (given by a finite set of generators). There exists finitely many computable pairs of matrices $(Q_1, P_1), \ldots, (Q_q, P_q)$ (of sizes $m\times m$ and $n\times m$, respectively) such that, for each $\varphi \in (\stab_{\mathcal{G}}(\mathcal{H}))\pi$,  
 \begin{equation}\label{eq fix}
\bigcap\limits_{{\tiny \begin{array}{c} P,Q \\ \mathcal{H} \leqslant \Fix {\Psi_{\varphi, Q, P}} \end{array}}}  \Fix \Psi_{\varphi, Q, P}= \Fix {\Psi_{\varphi, Q_1, P_1}} \cap \cdots \cap \Fix {\Psi_{\varphi, Q_q, P_q}}.
 \end{equation}
\end{prop}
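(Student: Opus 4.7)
The plan is to turn the condition ``$\Psi_{\varphi,Q,P}$ fixes $\HH$'' into an explicit system of $\ZZ$-linear constraints on $(Q,P)$ that is independent of $\varphi$, and then exploit the $\ZZ$-linearity of the fixed-point condition on $t^{\mathbf{a}}u$ to pass from all admissible $(Q,P)$ to a finite generating set of their parameter space.

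First I would fix a basis $\{t^{\mathbf{a}_1}u_1,\ldots,t^{\mathbf{a}_r}u_r;\, t^{\mathbf{b}_1},\ldots,t^{\mathbf{b}_s}\}$ of $\HH$ (computable from the given finite generating set, as recalled in Section~\ref{basics}) and evaluate $\Psi_{\varphi,Q,P}$ on each basis element. A direct computation decomposes ``$\HH\leqslant\Fix\Psi_{\varphi,Q,P}$'' into two independent conditions: (i) $u_i\varphi=u_i$ for all $i$, equivalent by Lemma~\ref{prop A} to $\varphi\in\stab_{\FF_n}(\HH\pi)$; and (ii) the integer system $\mathbf{b}_j Q=\mathbf{b}_j$ together with $\mathbf{a}_i(Q-I)+\mathbf{u}_i P=0$ for all $i,j$. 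The key observation is that (ii) does not mention $\varphi$, so the set of admissible pairs $(Q,P)$ is the same for every $\varphi\in\stab_{\FF_n}(\HH\pi)$.

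Setting $Q':=Q-I$ turns (ii) into a homogeneous $\ZZ$-linear system whose solution set $\mathcal{M}\leqslant \Mat_{m\times m}(\ZZ)\oplus\Mat_{n\times m}(\ZZ)$ is a finitely generated $\ZZ$-module (the ambient module being Noetherian). Using standard integer linear algebra (e.g.\ Smith normal form on the coefficient matrix assembled from the vectors $\mathbf{a}_i$, $\mathbf{b}_j$, $\mathbf{u}_i$), I would effectively compute generators $(Q'_1,P_1),\ldots,(Q'_q,P_q)$ of $\mathcal{M}$, and set $Q_i:=Q'_i+I$. By construction, for every $\varphi\in\stab_{\FF_n}(\HH\pi)$ each $\Psi_{\varphi,Q_i,P_i}$ belongs to $\stab_\GG(\HH)$, so the $(Q_i,P_i)$ appear among the pairs indexing the left-hand side of~\eqref{eq fix}; this delivers the inclusion $\subseteq$ for free.

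For the reverse inclusion I would take $t^{\mathbf{a}}u$ in the right-hand side of~\eqref{eq fix} and any admissible $(Q,P)$; writing $(Q-I,P)=\sum_{i=1}^{q}\lambda_i(Q'_i,P_i)$ with $\lambda_i\in\ZZ$, the $\ZZ$-linearity of the map $(X,Y)\mapsto \mathbf{a}X+\mathbf{u}Y$ propagates the equalities $\mathbf{a}Q'_i+\mathbf{u}P_i=0$ (supplied by $t^{\mathbf{a}}u\in\Fix\Psi_{\varphi,Q_i,P_i}$, since $\mathbf{a}Q_i+\mathbf{u}P_i=\mathbf{a}$ rewrites precisely as $\mathbf{a}Q'_i+\mathbf{u}P_i=0$) to $\mathbf{a}(Q-I)+\mathbf{u}P=0$; combined with $u\varphi=u$ (already guaranteed by any single $\Psi_{\varphi,Q_i,P_i}$ fixing $t^{\mathbf{a}}u$), this gives $t^{\mathbf{a}}u\in\Fix\Psi_{\varphi,Q,P}$. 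The main conceptual subtlety — and what I expect to be the only delicate point to articulate — is the \emph{uniformity} of the finite list $(Q_i,P_i)$ across all $\varphi$ simultaneously; this uniformity is what makes the proposition a useful reduction step, and it is available precisely because the admissibility of $(Q,P)$ decouples from $\varphi$ once $\varphi$ is assumed to fix $\HH\pi$.
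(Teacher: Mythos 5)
Your proposal is correct and follows essentially the same route as the paper's proof: reduce ``$\HH\leqslant\Fix\Psi_{\varphi,Q,P}$'' to a homogeneous integral linear system in $(Q-I,P)$ that is independent of $\varphi$, compute a finite generating set of its solution module, and use $\ZZ$-linearity of $(X,Y)\mapsto\mathbf{a}X+\mathbf{u}Y$ to propagate the fixed-point condition from the generators to every admissible pair. The only presentational difference is that you make explicit (via Lemma~\ref{prop A}) why each $\Psi_{\varphi,Q_i,P_i}$ lies in $\stab_{\GG}(\HH)$, which the paper leaves implicit.
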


\begin{proof}
Let $\mathcal{H}= \langle t^{\mathbf{a_1}}u_1, \ldots, t^{\mathbf{a_r}}u_r, t^{\mathbf{b_1}}, \ldots, t^{\mathbf{b_s}} \rangle$. Note that, for any type-(I) endomorphism $\Psi_{\varphi, Q, P}$, the condition $\HH\leqslant \Fix \Psi_{\varphi, Q, P}$ is equivalent to saying 
 \begin{equation}\label{equ homo}
\begin{array}{r} \mathbf{a_i}(Q-I)+\mathbf{u_i}P=\mathbf{0}; \phantom{aa} i=1, \ldots, r,\\ \mathbf{b_j} (Q-I)=\mathbf{0}; \phantom{aa} j=1, \ldots, s, \end{array}  
 \end{equation}
and $u_i\varphi= u_i$, for $i=1,\ldots ,r$; observe also that conditions~\eqref{equ homo} \emph{do not} depend on $\varphi$. 

Consider the system of linear equations given by 
 \begin{equation}\label{r equ homo}
\begin{array}{r} \mathbf{a_i}X+\mathbf{u_i} Y=\mathbf{0}; \phantom{aa} i=1, \ldots, r, \\ \mathbf{b_j} X=\mathbf{0}; \phantom{aa} j=1, \ldots, s, \end{array}  
 \end{equation}
where $X\in \Mat_{m\times m}(\ZZ)$ and $Y\in \Mat_{n\times m}(\ZZ)$ (this is the same as~\eqref{equ homo} after the change of variable $X=Q-I$ and $Y=P$). Equation~\eqref{r equ homo} is an homogeneous linear system of $mr+ms$ equations and $m^2+nm$ unknowns, having rank $p$, say. So, all its solutions form a subgroup of $\Mat_{m\times m}(\ZZ)\times \Mat_{n\times m}(\ZZ)$ of rank $q=m^2+nm-p$; that is, we can compute integral matrices $(X_1, Y_1), \ldots, (X_q, Y_q)$ of the corresponding sizes, such that each solution $(X,Y)$ of~\eqref{r equ homo} is of the form 
 $$
(X,Y) = \lambda_1(X_1, Y_1)+ \cdots + \lambda_q(X_q, Y_q),
 $$
for some (unique) integral parameters $\lambda_1, \ldots, \lambda_q\in \ZZ$.

Now, consider the $q$ pairs of matrices $(Q_1, P_1), \ldots ,(Q_q, P_q)$ given by $Q_i=X_i+I$ and $P_i=Y_i$, $i=i,\ldots ,q$. Fix an endomorphism $\varphi \in (\stab_{\mathcal{G}}(\mathcal{H}))\pi$, and let us prove Equation~\eqref{eq fix} from the statement. The inclusion to the right is immediate as in the left hand side we are intersecting more subgroups than in the right hand side. 

To prove the other inclusion, let $t^{\mathbf{a}}u$ be an arbitrary element from $\Fix {\Psi_{\varphi, Q_1, P_1}} \cap \cdots \cap \Fix {\Psi_{\varphi, Q_q, P_q}}$. In particular, $u\varphi =u$ and 
 \begin{equation}\label{standard equ for Q}
\mathbf{a}(Q_i-I )+\mathbf{u} P_i=\mathbf{0}, \phantom{aa} i=1, \ldots, q.
 \end{equation}
Let $\Psi_{\varphi, Q', P'}$ be an arbitrary endomorphism of $\mathcal{G}$ such that $\mathcal{H} \leqslant \Fix \Psi_{\varphi, Q', P'}$. Then $(Q', P')$ satisfies~\eqref{equ homo} and $(X', Y')=(Q'-I, P')$ is a solution of~\eqref{r equ homo} so, there exist $\lambda'_1, \ldots, \lambda'_q$ such that $(Q'-I, P')=\lambda'_1(Q_1-I, P_1) +\cdots +\lambda'_q(Q_q-I, P_q)$. From~\eqref{standard equ for Q} we have $\sum^q_{i=1}\lambda'_i(\mathbf{a}(Q_i-I)+\mathbf{u} P_i)=\mathbf{0}$ and so, $\mathbf{a}(\sum^q_{i=1}\lambda'_i(Q_i-I))+\mathbf{u} \sum^q_{i=1}\lambda'_i P_i =\mathbf{0}$, i.e., $\mathbf{a}(Q'-I)+ \mathbf{u} P'=\mathbf{0}$. Therefore, $t^{\mathbf{a}}u \in \Fix \Psi_{\varphi, Q', P'}$. This shows that $t^{\mathbf{a}}u$ belongs to the left hand side of~\eqref{eq fix} and the proof is complete. 
\end{proof}

Finally, we can prove the main result. 

\begin{proof}[Proof of Theorem~\ref{main}]
Let $\mathcal{H}= \langle t^{\mathbf{a_1}}u_1, \ldots, t^{\mathbf{a_r}}u_r, t^{\mathbf{b_1}}, \ldots, t^{\mathbf{b_s}}\rangle$. By Proposition~\ref{prop main}, we can compute a set of endomorphisms $S_0=\{ \id =\varphi_0, \varphi_1, \ldots ,\varphi_k\}\subseteq \stab_{\FF_n}(\HH\pi)$, $k\leqslant 2n$, such that
 $$
\ecl_{\mathcal{G}}(\mathcal{H})= \bigcap_{{\tiny \begin{array}{c} \mathcal{H} \leqslant \Fix {\Psi_{\varphi, Q, P}} \end{array}}} \Fix \Psi_{\varphi, Q, P} =\bigcap_{{\tiny \begin{array}{c} \varphi \in S_0 \\ \mathcal{H} \leqslant \Fix {\Psi_{\varphi, Q, P}} \end{array}}} \Fix \Psi_{\varphi, Q, P}.
 $$
On the other hand, by Proposition~\ref{prop B}, we can compute a set of pairs of integral matrices $\{ (Q_1, P_1), \ldots, (Q_q, P_q)\}$ (of sizes $m\times m$ and $n\times m$, respectively) such that, for each $\varphi \in (\stab_{\mathcal{G}}(\mathcal{H}))\pi$,  
 $$
\bigcap\limits_{{\tiny \begin{array}{c} P,Q \\ \mathcal{H} \leqslant \Fix {\Psi_{\varphi, Q, P}} \end{array}}}  \Fix \Psi_{\varphi, Q, P}= \Fix {\Psi_{\varphi, Q_1, P_1}} \cap \cdots \cap \Fix {\Psi_{\varphi, Q_q, P_q}}.
 $$
Putting all together, we get 
 $$
\ecl_{\mathcal{G}}(\mathcal{H}) =\bigcap_{{\tiny \begin{array}{c} \varphi \in S_0 \\ \mathcal{H} \leqslant \Fix {\Psi_{\varphi, Q, P}} \end{array}}} \Fix \Psi_{\varphi, Q, P}=\bigcap\limits_{i=0}^{k} \bigcap\limits_{{\tiny \begin{array}{c} P,Q \\ \mathcal{H} \leqslant \Fix {\Psi_{\varphi_i, Q, P}} \end{array}}}  \Fix \Psi_{\varphi_i, Q, P}=\bigcap\limits_{i=0}^{k} \,\,\, \bigcap\limits_{j=1}^{q} \Fix \Psi_{\varphi_i, Q_j, P_j}.
 $$
This shows the first part of the result, with $\mathcal{S}_0=\{\Psi_{\varphi_i, Q_j, P_j} \mid i=0,\ldots ,k,\,\, j=1,\ldots ,q\}$ (and  $\ell=(k+1)q$). 

Finally, apply the endomorphism version of Theorem~\ref{RV-fix} (see Remark~\ref{remark}) to decide whether $\Fix \mathcal{S}_0=\ecl_{\GG}(\HH)$ is finitely generated and, in case it is, compute a basis for it. Note additionally that, just for the purpose of the present proof we do not need to use Mutanguha's result: the computability of a basis of $\Fix \varphi_0 \cap\Fix \varphi_1 \cap \cdots \cap \Fix \varphi_k =\ecl_{\FF_n}(\HH\pi)$ can be done directly by invoking Theorem~\ref{thm for free endo}, instead of computing bases for each of the individual fixed subgroups and then intersecting them all. 
\end{proof}


\begin{proof}[Proof of Corollary~\ref{maincor}]
The proof is straightforward: apply Theorem~\ref{main} to $\HH$; if $\ecl_{\GG}(\HH)$ is strictly bigger than $\HH$, then $\HH$ is not endo-fixed (there are elements outside $\HH$ which are fixed by \emph{every} endomorphism of $\GG$ fixing $\HH$). Otherwise, $\ecl_{\GG}(\HH)=\HH$ and the algorithm in Theorem~\ref{main} also outputs a list of $\ell=(k+1)q$ endomorphisms $\Psi_1, \ldots ,\Psi_{\ell} \in \End(\GG)$, such that $\Fix\Psi_1 \cap \cdots \cap \Fix\Psi_\ell =\ecl_{\GG}(\HH)=\HH$.
\end{proof}

\section{Open questions}\label{open}

We end up asking a few related questions:

\begin{que}
Let $\GG=\Z^m\times \FF_n$. Is there an algorithm which, given a finite set of generators for a subgroup $\HH\leqfg \GG$, decides whether the monoid $\stab_{\GG}(\HH)$ is finitely generated or not and, in case it is, computes a set of endomorphisms $\Psi_1, \ldots, \Psi_\ell \in \End(\GG)$ such that $\stab_{\GG}(\HH)=\langle \Psi_1, \ldots, \Psi_\ell \rangle$~?
\end{que}

\begin{que}
Is it true that, for every $\HH\leqfg \GG=\Z^m\times \FF_n$, the auto-fixed closure $\acl_{\GG}(\HH)$ is again finitely generated ? What about the endo-fixed closure $\ecl_{\GG}(\HH)$~?
\end{que}

\begin{que}
What is the complexity of the proposed algorithms~? Can we do more efficient ones, say, polynomial~?
\end{que}

\section*{Acknowledgements}

\noindent The three authors acknowledge support from the Spanish Agencia Estatal de Investigaci\'on through grant PID2021-126851NB-100 (AEI/ FEDER, UE). The first named author wants to thank the hospitality received from Universidad del Pa\'{\i}s Vasco, and support through a Margarita Salas grant from Universitat Polit\`ecnica de Catalunya.



\begin{thebibliography}{99}


\bibitem{BH} M. Bestvina, M. Handel, ``Train tracks and automorphisms of free groups'', \emph{Ann. of Math.} \textbf{135} (1992), 1--51.


\bibitem{BM} O. Bogopolski, O. Maslakova, ``An algorithm for finding a basis of the fixed point subgroup of an automorphism of a free group'', \emph{Internat. J. Algebra Comput.} \textbf{26}(1) (2016), 29--67.


\bibitem{CD}  L. Ciobanu, W. Dicks, ``Two examples in the Galois theory of free groups'', \emph{J. Algebra} \textbf{305} (2006), 540--547.



\bibitem{DV} J. Delgado, E. Ventura, ``Algorithmic problems for free-abelian times free groups'', \emph{Journal of Algebra} \textbf{391} (2013), 256--283.

\bibitem{DRV} J. Delgado, M. Roy, E. Ventura, ``Intersection configurations in free times free-abelian groups'', accepted for publication at \emph{Proceedings of the Royal Society of Edinburgh Section A: Mathematics}.



\bibitem{DS} J.L. Dyer, G.P. Scott, ``Periodic automorphisms of free groups'', \emph{Comm. Alg.} \textbf{3} (1975), 195--201.

\bibitem{FH} M. Feighn, M. Handel, ``Algorithmic constructions of relative train track maps and CT's'', \emph{Groups Geom. Dyn.} \textbf{12}(3) (2018), 1159--1238.

\bibitem{IT} W. Imrich and E.C. Turner, ``Endomorphisms of free groups and their fixed points'', \emph{Math. Proc. Cambridge Philos. Soc.} \textbf{105} (1989), 421--422.

\bibitem{KM} I. Kapovich and A. Myasnikov, ``Stallings foldings and subgroups of free groups'', \textit{Journal of Algebra} 248(2) (2002), 608--668.

\bibitem{LS} R.C. Lyndon, P. Schupp, ``Combinatorial Group Theory'', reprint ed. Springer, Mar. 2001.

\bibitem{McCool} J. McCool, ``Some finitely presented subgroups of the automorphism group of a free group'', \emph{Journal of Algebra} \textbf{35}(1--3) (1975), 205--213.

\bibitem{MV} A. Martino, E. Ventura, ``On automorphism-fixed subgroups of a free group'', \emph{Journal of Algebra} \textbf{230} (2000), 596--607.


\bibitem{MV3} A. Martino, E. Ventura, ``Examples of retracts in free groups that are not the fixed subgroup of any automorphism'', \emph{Journal of Algebra} \textbf{269} (2003), 735--747.


\bibitem{M} J.P. Mutanguha, ``Constructing stable images'', \emph{preprint}, available at https://mutanguha.com/research.

\bibitem{RV} M. Roy, E. Ventura, ``Fixed subgroups and computation of auto-fixed closures in free-abelian times free groups", \emph{J. Pure and Applied Algebra} Volume 224, Issue 4, (2020), 106210, available at arXiv:1906.02144.


\bibitem{V} E. Ventura, ``Computing fixed closures in free groups'', \emph{Illinois Journal of Mathematics} \textbf{54}(1) (2011), 175--186.

\bibitem{V2} E. Ventura, ``Fixed subgroups in free groups: A survey'', \emph{Contemp. Math.} 296 (2002), 231--255. MR 1922276.

\end{thebibliography}
\end{document}